\documentclass[12pt]{amsart}
\usepackage{amsmath, amsthm, amssymb, tikz}
\usepackage[initials]{amsrefs}

\title{Littlestone and VC-dimension of families of zero sets}
\author[V Guingona]{Vincent Guingona}
\author[A Kolesnikov]{Alexei Kolesnikov}
\author[J Nierwinski]{Julie Nierwinski}
\author[R Soucy]{Richard Soucy}
\address{Towson University, 7800 York Rd., Towson, MD, 21252}
\email{vguingona@towson.edu}
\email{akolesnikov@towson.edu}
\date{\today}
\thanks{This work was partially supported by National Security Agency (NSA) grant H98230-20-1-0005, which funds a Research Experiences for Undergraduates program at Towson University. This work also received financial support from the Towson University Jess and Mildred Fisher College of Science and Mathematics.}

\newtheorem{theorem}{Theorem}[section]
\newtheorem{corollary}[theorem]{Corollary}
\newtheorem{lemma}[theorem]{Lemma}
\newtheorem{proposition}[theorem]{Proposition}

\theoremstyle{remark}
\newtheorem{remark}[theorem]{Remark}
\newtheorem{example}[theorem]{Example}

\theoremstyle{definition}
\newtheorem{definition}[theorem]{Definition}

\newcommand{\Pow}{\mathcal{P} }
\newcommand{\CC}{\mathcal{C} }
\newcommand{\NN}{\mathbb{N} }
\newcommand{\RR}{\mathbb{R} }
\newcommand{\ZZp}{\mathbb{Z}^+ }
\newcommand{\ZO}{Z}
\newcommand{\oa}{\overline{a} }
\newcommand{\ob}{\overline{b} }
\newcommand{\oc}{\overline{c} }
\newcommand{\oee}{\overline{e} }
\newcommand{\of}{\overline{f} }
\newcommand{\og}{\overline{g} }
\newcommand{\ox}{\overline{x} }
\newcommand{\ozero}{\overline{0} }
\DeclareMathOperator{\ldim}{Ldim}
\DeclareMathOperator{\vcdim}{VCdim}
\DeclareMathOperator{\lden}{Lden}
\DeclareMathOperator{\vcden}{VCden}
\DeclareMathOperator{\Span}{span}
\DeclareMathOperator{\supp}{supp}

\begin{document}

\maketitle

\begin{abstract}
    We prove that, for any $d$ linearly independent functions from some set into a $d$-dimensional vector space over any field, the family of zero sets of all non-trivial linear combination of these functions has VC-dimension and Littlestone dimension $d-1$.  Additionally, we characterize when such families are maximal of VC-dimension $d-1$ and give a sufficient condition for when they are maximal of Littlestone dimension $d-1$.
\end{abstract}

\section{Introduction}
Complexity, in the sense of machine learning theory, of the sets of positivity of linear combinations of real-valued functions is fairly well understood. If $f:\RR^k\to \RR$ is a function, the \emph{set of positivity of $f$} is the set $\{\ox\in \RR^k : f(\ox)>0\}$. If $\{f_1,\dots,f_d\}$ is a set of linearly independent functions from $\mathbb R^k$ to $\mathbb R$, then the set system of sets of positivity of all the linear combinations of $\{f_1,\dots,f_d\}$ has the VC dimension $d$ (see \cite{cover1965} or \cite[Theorem A]{dudley1979}). If $X$ is a finite subset of the domain of the functions, by Sauer--Shelah lemma, the size of the induced set system on $X$ is bounded above by $\sum_{i=0}^d \binom{|X|}{i}$ (we abbreviate this sum as $\binom{|X|}{\le d}$). 

There are known sufficient conditions on functions guaranteeing that for suitable finite sets $X$, the induced set systems have maximal size $\binom{|X|}{\le d}$. Floyd shows in \cite{floyd1989} that a condition on linear dimension of the family of functions relative to the set $X$ and a condition on the number of zeros are sufficient for maximality. Johnson provides analytic sufficient conditions in \cite{johnson2014} for maximality of the set system. It is worth pointing out that set systems given by the sets of positivity generally have infinite Littlestone dimension.

We study set systems of zero-sets of non-trivial linear combinations of functions from a set to a field. One expects to see a lower combinatorial complexity of this set system, and indeed we show that both Littlestone dimension and VC-dimension of zero-sets of linear combinations of $d$ linearly independent functions is $d-1$. Somewhat surprisingly, the system of zero-sets can be maximal (we formally define the notion in Definition~\ref{def_maximal}). We obtain a characterization of maximality of the set system.

Bhaskar has studied a shatter function related to Littlestone dimension (called ``thicket dimension'' in \cite{Bhaskar}). Bhaskar showed that the size of the set system that well-labels the leaves of the tree of depth $n$ (this is a natural notion for the shatter function for Littlestone dimension) is bounded above by $\binom{n}{\le d}$, where $d$ is the Littlestone dimension. It is natural to ask for examples of maximal families for this shatter function.

In this paper, we establish the following results: Let $\{ f_1, \dots, f_d \}$ be a set of functions from a set $X$ to a vector space $F^d$, where $F$ is a field and $d$ is a positive integer, and let $\CC$ be the set of all zero-sets of non-trivial linear combinations of $f_1, \dots, f_d$.
\begin{description}
    \item [Theorem \ref{Theorem_VCLDim}] If $\{ f_1, \dots, f_d \}$ is linearly independent (in the vector space of all functions from $X$ to $F$), then $\CC$ has VC-dimension and Littlestone dimension $d-1$.
    \item [Theorem \ref{Theorem_MaxVCDim}] $\CC$ is maximum of VC-dimension $d-1$ if and only if the image of $(f_1, ..., f_d)$ is not contained in the union of finitely many proper subspaces of $F^d$.
    \item [Corollary \ref{Corollary_MaxLDim}] If the image of $(f_1, ..., f_d)$ is not contained in the union of finitely many proper subspaces of $F^d$, then $\CC$ is maximum of Littlestone dimension $d-1$.
\end{description}

\section{Preliminaries}

For this paper, let $\NN$ denote the set of natural numbers, which for us is the set of all non-negative integers (including $0$).  Let $\ZZp$ denote the set of positive integers (i.e., $\ZZp = \NN \setminus \{ 0 \}$).

For $n, k \in \NN$ with $k \le n$, let
\[
    \binom{n}{k} = \frac{n!}{k!(n-k)!} \text{ and } \binom{n}{\le k} = \sum_{i=0}^k \binom{n}{i}.
\]
For all sets $X$ and $Y$, let ${}^X Y$ denote the set of all functions from $X$ to $Y$.  For any set $X$ and $f : X \rightarrow \RR$, let $\supp(f) = \{ a \in X : f(a) \neq 0 \}$.  For all sets $X$ and all $n \in \NN$, let $\binom{X}{n}$ denote the set of all subsets of $X$ of size $n$.  Similarly, let
\[
    \binom{X}{\le n} = \bigcup_{i \le n} \binom{X}{i} \text{ and } \binom{X}{< n} = \bigcup_{i < n} \binom{X}{i}.
\]
For all $n \in \NN$, let
\[
    [n] = \{ k \in \NN : 0 \le k < n \}.
\]
(Note that $[0] = \emptyset$.)  Then, we have that, for all $0 \le k \le n$,
\[
    \left| \binom{[n]}{k} \right| = \binom{n}{k} \text{ and } \left| \binom{[n]}{\le k} \right| = \binom{n}{\le k}.
\]
    
\begin{definition}
    Let $X$ be a set, $\CC \subseteq \Pow(X)$, and $d \in \NN$.  A \emph{$(X,\CC)$-labeled tree of depth $d$} is a function $T$ from $\bigcup_{k \le d} {}^{[k]} [2]$ to $X \cup \CC$ such that
    \begin{enumerate}
        \item for all $0 \le k < d$ and $\sigma \in {}^{[k]} [2]$, $T(\sigma) \in X$; and
        \item for all $\tau \in {}^{[d]} [2]$, $T(\tau) \in \CC$
    \end{enumerate}
    For $0 \le k \le d$ and $\sigma \in {}^{[k]} [2]$, we call $(\sigma, T(\sigma))$ a \emph{node} if $k < d$ and a \emph{leaf} if $k = d$.  A leaf $(\tau, T(\tau))$ is \emph{well-labeled} if, for all $0 \le k < d$,
    \[
        T(\tau |_{[k]}) \in T(\tau) \Longleftrightarrow \tau(k) = 1.
    \]
    We say that $T$ is \emph{well-labeled} if all leaves of $T$ are well-labeled.  We say that $T$ of depth $d$ is \emph{level-balanced} if, for all $k < d$ and for all $\sigma_0, \sigma_1 \in {}^{[k]} [2]$, $T(\sigma_0) = T(\sigma_1)$.
\end{definition}

\begin{definition}
    Let $X$ be a set and $\CC \subseteq \Pow(X)$.
    \begin{enumerate}
        \item The \emph{Littlestone dimension} of $\CC$, denoted $\ldim(\CC)$, is the largest natural number $d$ such that there exists a well-labeled $(X,\CC)$-labeled tree of depth $d$.  If there exists a well-labeled $(X,\CC)$-labeled tree of depth $d$ for all natural numbers $d$, we say $\CC$ has \emph{infinite Littlestone dimension}, denoted $\ldim(\CC) = \infty$.  If there exists no well-labeled $(X,\CC)$-labeled tree of depth $0$, we say $\ldim(\CC) = -\infty$.
        \item The \emph{Littlestone shatter function} of $\CC$, denoted $\rho_{\CC}$, is the function from $\NN$ to $\NN$ given by, for all $n \in \NN$, the maximum number of well-labeled leaves of a $(X, \CC)$-labeled tree $T$ of depth $n$.
        \item The \emph{Littlestone density} of $\CC$, denoted $\lden(\CC)$, is the infimum over all positive $\ell \in \RR$ such that there exists $K \in \RR$ such that, for all $n \ge 1$, $\rho_{\CC}(n) \le Kn^\ell$.
        \item If we restrict our attention to only level-balanced trees $T$ in the above definitions, we define instead the \emph{VC-dimension} of $\CC$, denoted $\vcdim(\CC)$, the \emph{VC-shatter function} of $\CC$, denoted $\pi_{\CC}$, and the \emph{VC-density} of $\CC$, denoted $\vcden(\CC)$, respectively.
    \end{enumerate}
\end{definition}

\begin{remark}\label{Remark_VCDimLDimBound}
    For any set $X$ and $\CC \subseteq \Pow(X)$, we clearly have
    \begin{enumerate}
        \item $\vcdim(\CC) \le \ldim(\CC)$,
        \item $\pi_{\CC}(n) \le \rho_{\CC}(n)$ for all $n \in \NN$, and
        \item $\vcden(\CC) \le \lden(\CC)$.
    \end{enumerate}
\end{remark}

The above definitions for the VC-dimension and the VC-shatter function are equivalent to the usual definitions given in terms of shattering.  In particular, Lemma \ref{Lemma_UsualVCDef} below can be found, for example, in \cite{chase2020}.

\begin{definition}
    Let $X$ be a set, $Y \subseteq X$, and $\CC \subseteq \Pow(X)$.  Let
    \[
        \CC |_Y = \{ A \cap Y : A \in \CC \},
    \]
    we call $\CC|_Y$ an \emph{induced set system on $Y$}.
    We say that $\CC$ \emph{shatters} $Y$ if $C |_Y = \Pow(Y)$.
\end{definition}

\begin{lemma}\label{Lemma_UsualVCDef}
    Let $X$ be a set and $\CC \subseteq \Pow(X)$.
    \begin{enumerate}
        \item the VC-dimension of $\CC$ is the largest $d \in \NN$ such that there exists $Y \in \binom{X}{d}$ with $| \CC |_Y | = 2^d$.
        \item the VC-shatter function of $\CC$ is given by, for all $n \in \NN$,
        \[
            \pi_{\CC}(n) = \max \left\{ \bigl| \CC |_Y \bigr| : Y \in \binom{X}{n} \right\}.
        \]
    \end{enumerate}
\end{lemma}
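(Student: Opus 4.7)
The plan is to unpack the definitions and build a direct correspondence between level-balanced well-labeled trees and projections of $\CC$ onto finite subsets of $X$. First, observe that a level-balanced $(X,\CC)$-labeled tree $T$ of depth $n$ is determined by a sequence $(x_0, \ldots, x_{n-1})$ of elements of $X$, where $T(\sigma) = x_k$ for any $\sigma \in {}^{[k]}[2]$, together with a choice of $T(\tau) \in \CC$ for each leaf $\tau \in {}^{[n]}[2]$. Writing $Y_T = \{x_0, \ldots, x_{n-1}\}$, the well-labeling condition on a leaf $\tau$ unwinds to $T(\tau) \cap Y_T = \{x_k : \tau(k) = 1\}$.

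For part~(1), given $Y = \{y_0, \ldots, y_{d-1}\} \in \binom{X}{d}$ with $\CC|_Y = \Pow(Y)$, I would construct a level-balanced tree of depth $d$ by setting $T(\sigma) = y_k$ for $\sigma \in {}^{[k]}[2]$ and, for each leaf $\tau$, picking $T(\tau) \in \CC$ that witnesses $T(\tau) \cap Y = \{y_k : \tau(k) = 1\}$. All $2^d$ leaves are then well-labeled, so $\vcdim(\CC) \ge d$. Conversely, given a well-labeled level-balanced tree of depth $d$, the labels $x_0, \ldots, x_{d-1}$ must be distinct: if $x_i = x_j$ with $i \ne j$, then any leaf $\tau$ with $\tau(i) \ne \tau(j)$ would require $x_i$ to be both in and out of $T(\tau)$. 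Taking $Y = \{x_0, \ldots, x_{d-1}\} \in \binom{X}{d}$, the $2^d$ leaves of $T$ witness $\CC|_Y = \Pow(Y)$.

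Part~(2) follows from a refinement of the same correspondence. Given any level-balanced tree of depth $n$, the map sending a well-labeled leaf $\tau$ to $\{x_k : \tau(k) = 1\}$ is injective into $\Pow(Y_T)$, since well-labeling forces $\tau(i) = \tau(j)$ whenever $x_i = x_j$, and its image lies inside $\CC|_{Y_T}$. Extending $Y_T$ to any $Y \in \binom{X}{n}$ can only enlarge the projection (the restriction map $\CC|_Y \to \CC|_{Y_T}$ is surjective), so the number of well-labeled leaves is bounded above by $\max\{|\CC|_Y| : Y \in \binom{X}{n}\}$. The matching lower bound uses the construction from part~(1) applied to an optimal $Y$, which produces exactly $|\CC|_Y|$ well-labeled leaves. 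The only substantive step, and the mildest of obstacles, is the injectivity/distinctness argument; the rest is a dictionary translation between the tree and shattering formalisms.
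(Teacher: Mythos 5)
The paper does not actually prove this lemma; it delegates it to a citation (\cite{chase2020}), so there is no in-paper argument to compare against. Your proof correctly fills that gap, and it uses the standard dictionary between the two formalisms. The key observations are all present and right: a level-balanced tree of depth $n$ is the data of a sequence $x_0,\dots,x_{n-1}$ together with a leaf assignment; well-labeling of a leaf $\tau$ is equivalent to $T(\tau)\cap Y_T=\{x_k:\tau(k)=1\}$; well-labeling forces $\tau(i)=\tau(j)$ whenever $x_i=x_j$, which both gives distinctness in part~(1) and injectivity of the leaf-to-set map in part~(2); and the restriction $\CC|_Y\to\CC|_{Y_T}$ is surjective when $Y_T\subseteq Y$, giving the upper bound in part~(2). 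The lower bound construction in part~(2) also works: enumerate an optimal $Y\in\binom{X}{n}$, use its elements as the level labels, and for each leaf $\tau$ assign a witnessing set from $\CC$ when $\{y_k:\tau(k)=1\}\in\CC|_Y$ (and anything otherwise), producing exactly $|\CC|_Y|$ well-labeled leaves. One small point you glide over: extending $Y_T$ to some $Y\in\binom{X}{n}$ implicitly assumes $|X|\ge n$; but this is an edge case already latent in the lemma's own statement (the maximum over $\binom{X}{n}$ is taken over a possibly empty index set), so it is not a defect of your argument. Overall this is a correct and complete proof of the standard equivalence.
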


\begin{lemma}[Sauer--Shelah Lemma]
   For any set $X$ and $\CC \subseteq \Pow(X)$, if $\CC$ has VC-dimension $d$, then, for all $n \ge d$,
    \[
        \pi_\CC(n) \le \binom{n}{\le d}.
    \]
\end{lemma}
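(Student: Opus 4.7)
The plan is a standard induction on $n$. By Lemma~\ref{Lemma_UsualVCDef}(2) it suffices to show that whenever $\CC \subseteq \Pow(X)$ has VC-dimension at most $d$ and $Y \in \binom{X}{n}$ with $n \ge d$, the bound $\bigl| \CC |_Y \bigr| \le \binom{n}{\le d}$ holds. The base case $n = d$ is immediate, since $\bigl| \CC |_Y \bigr| \le 2^{|Y|} = \binom{d}{\le d}$.

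For the inductive step I fix $n > d$ and $Y \in \binom{X}{n}$, pick any $y \in Y$, and set $Y' = Y \setminus \{y\}$. The key move is to introduce two set systems on $Y'$: the restriction $\CC_0 = \CC|_{Y'}$, and the ``doubled'' system
\[
    \CC_1 = \{ B \subseteq Y' : B \in \CC|_Y \text{ and } B \cup \{y\} \in \CC|_Y \}.
\]
The map $\CC|_Y \to \CC_0$ sending $T \mapsto T \setminus \{y\}$ is surjective with every fiber of size $1$ or $2$, and the fibers of size $2$ correspond exactly to the members of $\CC_1$. Hence $\bigl| \CC |_Y \bigr| = |\CC_0| + |\CC_1|$.

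Now $\CC_0$ is itself a restriction of $\CC$, so $\vcdim(\CC_0) \le d$ and the inductive hypothesis gives $|\CC_0| \le \binom{n-1}{\le d}$. The crux of the argument is to show $\vcdim(\CC_1) \le d - 1$. If $\CC_1$ shattered some $Z \subseteq Y'$, then for every $W \subseteq Z \cup \{y\}$ I would pick $B \in \CC_1$ with $B \cap Z = W \cap Z$ and, using that both $B$ and $B \cup \{y\}$ lift to members of $\CC$, realize $W$ as a trace on $Z \cup \{y\}$ (the lift of $B$ when $y \notin W$, the lift of $B \cup \{y\}$ when $y \in W$). Thus $\CC$ would shatter $Z \cup \{y\}$, forcing $|Z| + 1 \le d$. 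The inductive hypothesis then gives $|\CC_1| \le \binom{n-1}{\le d-1}$.

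Combining the two bounds and applying the Pascal-type identity $\binom{n-1}{\le d} + \binom{n-1}{\le d-1} = \binom{n}{\le d}$ closes the induction. The only step requiring genuine thought is the VC-dimension drop for $\CC_1$; everything else is careful bookkeeping and a standard binomial identity.
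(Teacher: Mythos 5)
The paper does not prove this lemma at all: it is stated as the classical Sauer--Shelah Lemma and used as a known black box (just as Lemma~\ref{Lemma_Bhaskar} is cited for the Littlestone analogue), so there is no in-paper argument to compare against. Your proof is the standard induction with the splitting $\CC|_Y$ into $\CC_0 = \CC|_{Y'}$ and the ``doubled'' system $\CC_1$, and it is correct: the fiber count giving $|\CC|_Y| = |\CC_0| + |\CC_1|$ is right, the lifting argument showing that a set $Z$ shattered by $\CC_1$ yields $Z \cup \{y\}$ shattered by $\CC$ is right, and the identity $\binom{n-1}{\le d} + \binom{n-1}{\le d-1} = \binom{n}{\le d}$ follows from Pascal's rule. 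Two small bookkeeping points you should make explicit. First, since the step for $\CC_1$ invokes the hypothesis with parameter $d-1$, the statement you induct on must be quantified over all $d$ at once (induct on $n$ with the claim ``for every $d \le n$ and every system of VC-dimension at most $d$''); as phrased, with $d$ fixed in advance, the appeal to the inductive hypothesis for $\CC_1$ is not literally licensed. Second, the edge case $d = 0$: there $\binom{n-1}{\le d-1}$ needs the convention that it equals $0$, or, more cleanly, note that a nonempty $\CC_1$ would force $\CC$ to shatter $\{y\}$, so $\CC_1 = \emptyset$ and the bound holds trivially. Neither point affects the substance; also note that your use of the shattering formulation is justified in this paper by Lemma~\ref{Lemma_UsualVCDef}, since the paper's official definition of VC-dimension is via level-balanced trees.
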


An analogous lemma holds in the Littlestone case.

\begin{lemma}[\cite{Bhaskar}]\label{Lemma_Bhaskar}
    For any set $X$ and $\CC \subseteq \Pow(X)$, if $\CC$ has Littlestone dimension $d$, then, for all $n \ge d$,
    \[
        \rho_\CC(n) \le \binom{n}{\le d}.
    \]
\end{lemma}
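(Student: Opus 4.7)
The plan is to mimic the classical inductive proof of the Sauer--Shelah lemma, but working directly with the tree structure. I would induct on $n$, proving the uniform statement that $\rho_{\CC}(n) \le \binom{n}{\le d}$ holds for every $n \in \NN$ and every $\CC$ with $\ldim(\CC) \le d$. The base case $n = 0$ is immediate, since an $(X, \CC)$-labeled tree of depth $0$ consists of a single leaf which is vacuously well-labeled, giving $\rho_{\CC}(0) \le 1 = \binom{0}{\le d}$.

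For the inductive step, let $T$ be an $(X, \CC)$-labeled tree of depth $n+1$ with root $x_0 = T(\emptyset)$, and let $T_0, T_1$ denote its left and right subtrees at $\langle 0 \rangle$ and $\langle 1 \rangle$, which are $(X, \CC)$-labeled trees of depth $n$. Partition $\CC = \CC_0 \sqcup \CC_1$, where $\CC_0 = \{ A \in \CC : x_0 \notin A \}$ and $\CC_1 = \{ A \in \CC : x_0 \in A \}$. Observe that a leaf $\tau$ of $T$ with $\tau(0) = i$ is well-labeled in $T$ if and only if $T(\tau) \in \CC_i$ and, after dropping its first coordinate, $\tau$ is well-labeled in $T_i$ regarded as an $(X, \CC_i)$-tree. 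Consequently, the number of well-labeled leaves of $T$ is bounded above by $\rho_{\CC_0}(n) + \rho_{\CC_1}(n)$.

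The crux is the claim that $\min(\ldim(\CC_0), \ldim(\CC_1)) \le d - 1$. Indeed, if both dimensions were at least $d$, we could take well-labeled $(X, \CC_i)$-labeled trees $S_i$ of depth $d$ for each $i \in \{0, 1\}$ and glue them under a new root labeled by $x_0$: the resulting $(X, \CC)$-labeled tree of depth $d+1$ is well-labeled at its new root by the very definition of $\CC_0$ and $\CC_1$, and is well-labeled elsewhere by hypothesis on $S_0, S_1$. This contradicts $\ldim(\CC) \le d$. Assuming without loss of generality that $\ldim(\CC_0) \le d - 1$, the inductive hypothesis yields $\rho_{\CC_0}(n) \le \binom{n}{\le d-1}$ and $\rho_{\CC_1}(n) \le \binom{n}{\le d}$, whose sum equals $\binom{n+1}{\le d}$ by Pascal's identity.

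The main thing to watch is the index bookkeeping in the gluing step: the well-labeledness condition $T(\sigma) \in T(\tau) \iff \tau(|\sigma|) = 1$ for each proper prefix $\sigma$ of a leaf $\tau$ must be verified separately at the new root (handled by the partition of $\CC$) and at each internal vertex inherited from $S_0$ or $S_1$ (handled by the assumed well-labeledness there). Aside from writing out this shift between a tree and its subtrees carefully, the argument is purely combinatorial and presents no further obstacles.
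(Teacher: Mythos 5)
The paper does not actually prove this lemma; it is quoted from Bhaskar's paper \emph{Thicket Density} and used as a black box, so there is no in-paper argument to compare against. Your proof is correct and is the natural thicket analogue of the inductive proof of Sauer--Shelah, which is also how this bound is established in the cited source. Two small points you gloss over and should spell out. First, the subtree $T_i$ need not be an $(X,\CC_i)$-labeled tree, since some of its leaves may be labeled by elements of $\CC \setminus \CC_i$; to justify the bound by $\rho_{\CC_i}(n)$, you should relabel those leaves with some fixed element of $\CC_i$ (when $\CC_i \neq \emptyset$), observing that this does not destroy any of the leaves you are counting, and when $\CC_i = \emptyset$ the contribution is trivially $0$. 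Second, the WLOG step implicitly uses $\ldim(\CC_1) \le \ldim(\CC) \le d$, which holds because $\CC_1 \subseteq \CC$; and the degenerate case $d = 0$ with $\binom{n}{\le -1} = 0$ should be flagged. With those remarks filled in, the argument is complete.
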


This motivates the following two definitions:

\begin{definition}
\label{def_maximal}
    Let $X$ be a set and $\CC \subseteq \Pow(X)$.
    \begin{enumerate}
        \item For $d \in \NN$, we say $\CC$ is \emph{maximal of VC-dimension $d$} if, for all $n \ge d$,
        \[
            \pi_\CC(n) = \binom{n}{\le d}.
        \]
        \item For $d \in \NN$, we say $\CC$ is \emph{maximal of Littlestone dimension $d$} if, for all $n \ge d$,
        \[
            \rho_\CC(n) = \binom{n}{\le d}.
        \]
    \end{enumerate}
\end{definition}

\begin{remark}\label{Remark_VCDenVCDim}
    For any set $X$ and $\CC \subseteq \Pow(X)$, by the Sauer-Shelah Lemma,
    \[
        \vcden(\CC) \le \vcdim(\CC).
    \]
    Similarly, by Lemma \ref{Lemma_Bhaskar},
    \[
        \lden(\CC) \le \ldim(\CC)
    \]
    Moreover, if $\CC$ is maximal of VC-dimension $d$, then
    \[
        \vcden(\CC) = \vcdim(\CC) = d
    \]
    and, if $\CC$ is maximal of Littlestone dimension $d$, then
    \[
        \lden(\CC) = \ldim(\CC) = d.
    \]
    If $\CC$ is maximal of VC-dimension $d$ and $\ldim(\CC) = d$, then $\CC$ is maximal of Littlestone dimension $d$ as well.  This is because, in this case, for all $n \ge d$,
    \[
        \binom{n}{\le d} = \pi_\CC(n) \le \rho_\CC(n) \le \binom{n}{\le d}.
    \]
\end{remark}

\section{VC-dimension and Littlestone dimension of zero sets.}

Let $X$ be a set and $F$ a field. The family of functions from $X$ to $F$ has a natural structure of a vector space over $F$ with the pointwise addition operation. The notion of linear independence below is taken in the sense of that vector space. In particular, if $X=F=\mathbb{F}_3$, then the set of functions $\{x,x^3\}$ is linearly dependent, because $x$ and $x^3$ define the same function from $\mathbb F_3$ to $\mathbb F_3$.

\begin{definition}
    Let $F$ be a field, $d \in \ZZp$, and $\oa, \ob \in F^d$.  Let $\oa \cdot \ob$ denote the usual dot product of $a$ and $b$.  That is,
    \[
        (a_0, \dots, a_{d-1}) \cdot (b_0, \dots, b_{d-1}) = \sum_{i=0}^{d-1} a_i b_i
    \]
    for all $a_0, \dots, a_{d-1}, b_0, \dots, b_{d-1} \in F$.  We say that $\oa, \ob \in F^d$ are \emph{orthogonal} if $\oa \cdot \ob = 0$.
\end{definition}
We note that the term \emph{orthogonal} is used as a short-hand; the vector space $F^d$ with the dot product may not be an inner product space.

If $\of= (f_0,\dots,f_{d-1})$ is a tuple of functions from $X$ to $F$ and $\oa= (a_0,\dots,a_{d-1})$ is a tuple of scalars from $F$, it is convenient to write the linear combination $a_0f_0+\dots +a_{d-1}f_{d-1}$ as the dot product $\oa \cdot \of$. 

\begin{definition}
    Let $X$ be a set, $F$ a field, $d \in \ZZp$, $\oa \in F^d$, and $\of : X \rightarrow F^d$.  Define the \emph{zero set} of $\of$ and $\oa$ by
    \[
        \ZO_{\of,\oa} = \{ c \in X : \oa \cdot \of(c) = 0 \}.
    \]
    Define
    \[
        \CC_{\of} = \left\{ \ZO_{\of,\oa} : \oa \in F^d \setminus \{ \ozero \} \right\}.
    \]
\end{definition}

\begin{example}\label{Example_Conics}
    If $X = \RR^2$, $F = \RR$, $d = 6$, and
    \[
        \of(x,y) = (x^2,xy,y^2,x,y,1),
    \]
    then $\CC_{\of}$ is the set of all conic sections in $\RR^2$.
\end{example}

\begin{lemma}\label{Lemma_LinearlyIndependent}
    Let $X$ be a set, $F$ a field, $d \in \ZZp$, and $\of : X \rightarrow F^d$.  The following are equivalent:
    \begin{enumerate}
        \item $\{ f_0, f_1, \dots, f_{d-1} \}$ is a linearly independent subset of the $F$-vector space ${}^X F$;
        \item for all $\oa \in F^d \setminus \{ \ozero \}$, there exists $c \in X$ such that $\oa \cdot \of(c) \neq 0$; and
        \item $\of(X)$ is not contained in a proper subspace of $F^d$.
    \end{enumerate}
\end{lemma}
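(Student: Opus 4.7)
The plan is to establish the two equivalences $(1)\Leftrightarrow(2)$ and $(2)\Leftrightarrow(3)$ separately, both by unwinding definitions, since both are essentially linear-algebra tautologies once one identifies the dot product with the standard duality pairing on $F^d$. It will be cleanest to verify the biconditionals by contraposition, showing that each of $\neg(1)$, $\neg(2)$, and $\neg(3)$ says there is some $\oa \in F^d \setminus \{\ozero\}$ with $\oa \cdot \of(c) = 0$ for every $c \in X$.

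First I would handle $(1)\Leftrightarrow(2)$. By the definition of pointwise addition on ${}^X F$, the function $a_0 f_0 + \cdots + a_{d-1} f_{d-1} = \oa \cdot \of$ is the zero element of ${}^X F$ if and only if $\oa \cdot \of(c) = 0$ for every $c \in X$. Hence $\{f_0,\dots,f_{d-1}\}$ fails to be linearly independent precisely when there exists $\oa \in F^d \setminus \{\ozero\}$ such that $\oa \cdot \of(c) = 0$ for all $c \in X$, which is exactly the negation of $(2)$.

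Next I would handle $(2)\Leftrightarrow(3)$, again by comparing negations. If $\of(X)$ lies in a proper subspace $V \subsetneq F^d$, then $V$ is contained in some hyperplane (the kernel of any non-zero linear functional $\varphi$ extending the zero functional on $V$), and under the standard identification of $(F^d)^*$ with $F^d$ via the dot product, this $\varphi$ has the form $\ox \mapsto \oa \cdot \ox$ for a unique non-zero $\oa \in F^d$; then $\oa \cdot \of(c) = 0$ for all $c \in X$, so $(2)$ fails. Conversely, if some $\oa \neq \ozero$ satisfies $\oa \cdot \of(c) = 0$ for every $c \in X$, then $\of(X) \subseteq \{\ox \in F^d : \oa \cdot \ox = 0\}$, and this hyperplane is a proper subspace of $F^d$ because $\oa \neq \ozero$, so $(3)$ fails.

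The argument is entirely elementary, and the only step that requires any care is the passage from ``proper subspace'' to ``hyperplane cut out by a non-zero dot-product functional.'' In particular one should note that every proper subspace of $F^d$ is contained in a hyperplane (extend a basis of $V$ to a basis of $F^d$ and take the coordinate functional dual to any added basis vector) and that over an arbitrary field $F$ the dot product still furnishes an isomorphism $F^d \to (F^d)^*$ sending $\oa$ to $\oa \cdot (-)$, so no inner-product structure is needed despite the suggestive word \emph{orthogonal}.
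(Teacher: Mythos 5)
Your proof is correct, and the underlying mathematics is the same as the paper's, but the organization and level of detail differ. The paper proves the cycle $(1)\Rightarrow(2)\Rightarrow(3)\Rightarrow(1)$, while you prove the two biconditionals $(1)\Leftrightarrow(2)$ and $(2)\Leftrightarrow(3)$ by contraposition; this is a purely structural difference. The more substantive difference is in the direction ``$\neg(3)\Rightarrow\neg(2)$'': the paper simply says ``take $\oa\in F^d$ non-zero and orthogonal to $V$'' without justifying that such an $\oa$ exists for a proper subspace $V$ over an arbitrary field, whereas you spell this out via the dual-functional argument --- extend a basis of $V$, take a nonzero functional vanishing on $V$, and use the isomorphism $F^d\cong (F^d)^*$ induced by the dot product (which depends only on nondegeneracy of the standard bilinear form, not on any inner-product structure). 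This is exactly the point that warrants care given the paper's own disclaimer that $F^d$ with the dot product need not be an inner product space, so your version is slightly more rigorous; the paper's is terser but reaches the same conclusion, since nondegeneracy of the dot product guarantees $\dim V + \dim V^\perp = d$ and hence $V^\perp\neq\{\ozero\}$ whenever $V$ is proper.
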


\begin{proof}
    (1) $\Rightarrow$ (2): If $\{ f_0, f_1, \dots, f_{d-1} \}$ is a linearly independent set, then, for all $\oa \in F^d$, $\sum_{k < d} a_k f_k$ is identically zero if and only if $\oa = \ozero$.  Therefore, for any non-zero $\oa \in F^d$, $\sum_{k < d} a_k f_k$ is not identically zero, so there exists $c \in X$ such that $\oa \cdot \of(c) \neq 0$.
    
    (2) $\Rightarrow$ (3): Suppose that (3) fails.  So $\of(X)$ is contained in a proper subspace $V$ of $F^d$.  Take $\oa \in F^d$ non-zero and orthogonal to $V$.  Then, for all $c \in X$, $\oa \cdot \of(c) = 0$.  Hence, (2) fails.
    
    (3) $\Rightarrow$ (1): Suppose that (1) fails.  So, there exists $\oa \in F^d \setminus \{ \ozero \}$ such that $\sum_{k < d} a_k f_k$ is identically zero.  Therefore, $\of(X)$ is contained in the subspace of $F^d$ orthogonal to $\oa$.
\end{proof}

\begin{definition}
    Let $X$ be a set, $F$ a field, $d \in \ZZp$, and $\of : X \rightarrow F^d$.  If $\of$ satisfies any of the conditions of Lemma \ref{Lemma_LinearlyIndependent}, we say that $\of$ is \emph{linearly independent}.
\end{definition}

In Proposition \ref{Proposition_LittlestoneDim} below, we show that the Littlestone dimension of $\CC_{\of}$ is bounded above by $d-1$.  This is proven by contradiction: from a well-labeled $(X, \CC_{\of})$-labeled tree of depth $d$, we create a $d \times d$ matrix that has both full rank and less than full rank.

\begin{proposition}\label{Proposition_LittlestoneDim}
    Let $X$ be a set, $F$ a field, $d \in \ZZp$, and $\of : X \rightarrow F^d$.  Then, $\ldim(\CC_{\of}) < d$.
\end{proposition}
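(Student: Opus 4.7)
The plan is to argue by contradiction: assume a well-labeled $(X,\CC_{\of})$-labeled tree $T$ of depth $d$ exists, and produce a $d \times d$ matrix whose invertibility is forced by some carefully chosen leaves while the existence of one other leaf precludes the associated orthogonality. The key combinatorial move is to harvest internal nodes along the ``all-$1$'' branch together with leaves that branch off from that branch at each level.

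Concretely, for $0 \le k < d$ let $\sigma_k \in {}^{[k]}[2]$ denote the constant-$1$ sequence and set $c_k = T(\sigma_k) \in X$. Also let $\tau_* \in {}^{[d]}[2]$ be the constant-$1$ leaf; this contributes a non-zero $\oa_* \in F^d$ with $T(\tau_*) = \ZO_{\of,\oa_*}$. Since every $\tau_*(k) = 1$ and $\tau_*|_{[k]} = \sigma_k$, well-labeling immediately forces $\oa_* \cdot \of(c_k) = 0$ for all $k < d$. Next, for each $j < d$ I would pick any leaf $\tau_j \in {}^{[d]}[2]$ with $\tau_j|_{[j]} = \sigma_j$ and $\tau_j(j) = 0$ (remaining entries are irrelevant), obtaining a non-zero $\oa_j$ with $T(\tau_j) = \ZO_{\of,\oa_j}$; well-labeling at levels $k < j$ and $k = j$ then yields $\oa_j \cdot \of(c_k) = 0$ for $k < j$ and $\oa_j \cdot \of(c_j) \ne 0$.

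The payoff is that the $d \times d$ matrix $M$ with $M_{j,k} = \oa_j \cdot \of(c_k)$ is upper triangular with non-zero diagonal entries, hence invertible. Writing $M = AB$, where $A$ has rows $\oa_0, \ldots, \oa_{d-1}$ and $B$ has columns $\of(c_0), \ldots, \of(c_{d-1})$, invertibility of $M$ forces $B$ to be invertible, so $\of(c_0), \ldots, \of(c_{d-1})$ span $F^d$. But then $\oa_* \ne \ozero$ is orthogonal to every vector in a spanning set of $F^d$, which contradicts the non-degeneracy of the standard dot product on $F^d$.

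The main obstacle --- really the only creative step --- is the design of the leaves $\tau_j$ so that the pairing matrix $M$ has the triangular shape; once that is set up, the rest is routine linear algebra, and the argument goes through over an arbitrary field because the standard dot product on $F^d$ is non-degenerate (it pairs the standard basis with itself).
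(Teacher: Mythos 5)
Your proposal is correct and takes essentially the same approach as the paper: both work along the all-ones branch, use the all-ones leaf to get a nonzero $\oa_*$ orthogonal to every $\of(c_k)$, and use the leaves of the form $1\cdots 1 0\cdots$ to contradict this via a rank argument. The only difference is organizational --- you assemble all $d$ branching leaves into one triangular pairing matrix to conclude that the $\of(c_k)$ span $F^d$ and then invoke non-degeneracy of the dot product, whereas the paper first extracts a linear dependence among the $\of(c_k)$ from the all-ones leaf and contradicts it using a single branching leaf at the point of dependence.
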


\begin{proof}
    For each natural number $k$, let $1_k$ denote the function from $[k]$ to $\NN$ that is constantly $1$.  Assume that $T$ is a well-labeled $(X,\CC_{\of})$-labeled tree of depth $d$.  First, we will show that the set
    \[
        V = \left\{ \of(T(1_k)) : k \in [d] \right\} \subseteq F^d
    \]
    is linearly dependent.
    
    Since $T(1_d) \in \CC_{\of}$, there exists $\oa \in F^d \setminus \{ \ozero \}$ such that $T(1_d) = \ZO_{\of,\oa}$.  On the other hand, since $T$ is well-labeled, $T(1_k) \in T(1_d)$ for all $k < d$.  Therefore, for all $k < d$,
    \[
        \oa \cdot \of(T(1_k)) = 0.
    \]
    Therefore, we have
    \[
        \left[
            \begin{array}{c c c}
                f_0(T(1_0)) & \cdots & f_{d-1}(T(1_0)) \\
                \vdots & \ddots & \vdots \\
                f_0(T(1_{d-1})) & \cdots & f_{d-1}(T(1_{d-1}))
            \end{array}
        \right]
        \left[
            \begin{array}{c}
                a_0 \\
                \vdots \\
                a_{d-1}
            \end{array}
        \right] = 
        \left[
            \begin{array}{c}
                0 \\
                \vdots \\
                0
            \end{array}
        \right].
    \]
    Since $\oa \neq \ozero$, we conclude that $V$ is linearly dependent.
    
    Since $V$ is linearly dependent, there exists $0 \le k < d$ and $\ob \in F^k$ such that
    \[
        \of(T(1_k)) = \sum_{j=0}^{k-1} b_j \of(T(1_j)).
    \]
    Consider $\tau : [d] \rightarrow [2]$ given by
    \[
        \tau(j) = \begin{cases} 1 & \text{ if } j < k \\ 0 & \text{ if } k \le j < d \end{cases}. 
    \]
    Then, for all $j < k$, $T(1_j) \in T(\tau)$ and $T(1_k) \notin T(\tau)$.  Choose $\oa \in F^d \setminus \{ \ozero \}$ such that $T(\tau) = \ZO_{\of,\oa}$.  Then, for all $j < k$,
    \[
        \oa \cdot \of(T(1_j)) = 0 \text{ and } \oa \cdot \of(T(1_k)) \neq 0.
    \]
    However,
    \begin{align*}
        \oa \cdot \of(T(1_k)) = & \ \oa \cdot \left( \sum_{j=0}^{k-1} b_j \of(T(1_j)) \right) = \\ & \ \sum_{j=0}^{k-1} b_j ( \oa \cdot \of(T(1_j)) ) = 0.
    \end{align*}
    This is a contradiction.
    
    Therefore, no such well-labeled tree $T$ exists.  That is, $\ldim(\CC_{\of}) < d$.
\end{proof}

Next, in Theorem \ref{Theorem_VCLDim} below we establish that, if $\of$ is linearly independent, then the VC-dimension and Littlestone dimension of $\CC_{\of}$ are both exactly $d-1$.  This is done through Lemma \ref{Lemma_VCDim}, where we create a new tuple of functions in the span of $\of$ that acts as an indicator function on a subset of $X$ of size $d$.  Then, in Proposition \ref{Proposition_VCDim}, we use this auxiliary tuple of functions to produce a large subset of $X$ that is shattered by $\CC_{\of}$.

\begin{lemma}\label{Lemma_VCDim}
    Let $X$ be a set, $F$ a field, $d \in \ZZp$, and $\of : X \rightarrow F^d$ linearly independent.  Then, there exists $\oc \in X^d$ and $\og : X \rightarrow F^d$ such that $g_k \in \Span \{ f_0, \dots, f_{d-1} \}$ for all $k < d$ and, for all $i, j < d$,
    \[
        g_j(c_i) = \begin{cases} 1 & \text{ if } j = i \\ 0 & \text{ otherwise } \end{cases}.
    \]
\end{lemma}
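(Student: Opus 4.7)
The plan is to choose $d$ points in $X$ whose images under $\of$ form a basis of $F^d$, and then to define the $g_j$ as the coordinate functionals for that basis, written back in terms of $f_0, \dots, f_{d-1}$.

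First, I would invoke Lemma \ref{Lemma_LinearlyIndependent}: since $\of$ is linearly independent, $\of(X)$ is not contained in any proper subspace of $F^d$, and therefore $\Span(\of(X)) = F^d$. Consequently, I can choose $c_0, \dots, c_{d-1} \in X$ such that $\of(c_0), \dots, \of(c_{d-1})$ is a basis of $F^d$. Let $\oc = (c_0, \dots, c_{d-1})$ and let $M$ denote the $d \times d$ matrix with entries $M_{ij} = f_j(c_i)$, whose rows are precisely the vectors $\of(c_i)$. Since these rows form a basis, $M$ is invertible.

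Next, I would write $\alpha = M^{-1}$ and set
\[
    g_j = \sum_{k=0}^{d-1} \alpha_{kj} f_k
\]
for each $j < d$. By construction $g_j \in \Span\{f_0, \dots, f_{d-1}\}$. The verification is direct:
\[
    g_j(c_i) = \sum_{k=0}^{d-1} \alpha_{kj} f_k(c_i) = \sum_{k=0}^{d-1} M_{ik} \alpha_{kj} = (M \alpha)_{ij} = I_{ij},
\]
which is exactly the Kronecker delta. Setting $\og = (g_0, \dots, g_{d-1})$ gives the required tuple of functions.

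The only substantive step is choosing the $c_i$'s so that $M$ is invertible; once that is done, the construction of $\og$ is just linear algebra (inverting $M$ and reading off the coordinate functionals). Obtaining those $c_i$'s reduces to the fact that a spanning set for $F^d$ contains a basis, combined with characterization (3) of linear independence from Lemma \ref{Lemma_LinearlyIndependent}, so there is no real obstacle here beyond assembling these pieces carefully.
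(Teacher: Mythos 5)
Your proof is correct, and it takes a genuinely different route from the paper's. The paper proves the lemma by induction on $d$: at each step it produces one more point $c_{d-1}$ and correction term $g'_{d-1}$, then renormalizes and adjusts the previously constructed functions; this is in effect a dual Gram--Schmidt procedure that interleaves the choice of points with the construction of functions. You instead select all of $\oc$ up front by extracting from the spanning set $\of(X)$ a basis $\of(c_0), \dots, \of(c_{d-1})$ of $F^d$ (using equivalence (3) of Lemma~\ref{Lemma_LinearlyIndependent}), form the invertible matrix $M$ with rows $\of(c_i)$, and read off $\og$ from $M^{-1}$ as the dual basis expressed in the $f$-coordinates. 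Your approach is more conceptual and shorter, making transparent that $\og$ is exactly the dual basis to $\of(\oc)$; the paper's inductive version avoids the (admittedly standard) step of extracting a basis from a spanning set and gives a more explicitly algorithmic construction of the points $c_i$. Both correctly exploit the same underlying fact, namely that linear independence of $\of$ forces $\of(X)$ to span all of $F^d$.
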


\begin{proof}
    By induction on $d$.  If $d = 1$, choose any $c_0 \in X$ where $f_0(c_0) \neq 0$ and set $g_0(x) = (f_0(c_0))^{-1} f_0(x)$.
    
    Fix $d > 1$ and suppose that $\of : X \rightarrow F^d$ is linearly independent.  By the induction hypothesis, there exists $\oc \in X^{d-1}$ and $\og' : X \rightarrow F^{d-1}$ such that $g'_k \in \Span \{ f_0, \dots, f_{d-2} \}$ for all $k < d-1$ and, for all $i, j < d - 1$,
    \[
        g'_j(c_i) = \begin{cases} 1 & \text{ if } j = i \\ 0 & \text{ otherwise } \end{cases}.
    \]
    Define
    \[
        g'_{d-1}(x) = f_{d-1}(x) - \sum_{i=0}^{d-2} f_{d-1}(c_i) g'_i(x).
    \]
    Then, $g'_{d-1}(c_i) = 0$ for all $i < d - 1$.  On the other hand, since $g'_{d-1}$ is a non-trivial linear combination of $\{ f_0, \dots, f_{d-1} \}$, which is linearly independent, $g'_{d-1}$ is non-zero.  Thus, there exists $c_{d-1} \in X$ such that $g'_{d-1}(c_{d-1}) \neq 0$.  Finally, set
    \[
        g_{d-1}(x) = (g'_{d-1}(c_{d-1}))^{-1} g'_{d-1}(x).
    \]
    and set
    \[
        g_i(x) = g'_i(x) - g'_i(c_{d-1}) g_{d-1}(x)
    \]
    for $i < d - 1$.  Check that this gives the desired conclusion.
\end{proof}

\begin{proposition}\label{Proposition_VCDim}
    Let $X$ be a set, $F$ a field, $d \in \ZZp$, and $\of : X \rightarrow F^d$ linearly independent.  Then, $\vcdim(\CC_{\of}) \ge d-1$.
\end{proposition}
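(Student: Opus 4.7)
The plan is to use Lemma~\ref{Lemma_VCDim} to produce points $\oc = (c_0,\dots,c_{d-1}) \in X^d$ and functions $\og = (g_0,\dots,g_{d-1})$ in $\Span\{f_0,\dots,f_{d-1}\}$ satisfying the Kronecker-delta identity $g_j(c_i) = \delta_{ij}$. From this identity alone the $c_i$'s must be pairwise distinct (if $c_i = c_{i'}$ with $i\ne i'$, then $g_i$ would have two incompatible values at the same point). My goal is then to show that $\CC_{\of}$ shatters the $(d-1)$-element subset $Y = \{c_0,\dots,c_{d-2}\}$, which immediately implies $\vcdim(\CC_{\of}) \ge d-1$ by Lemma~\ref{Lemma_UsualVCDef}.

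To shatter $Y$, I fix an arbitrary subset $S \subseteq Y$ and exhibit an $\oa \in F^d \setminus \{\ozero\}$ with $\ZO_{\of,\oa} \cap Y = S$. The natural candidate is the function
\[
    h = g_{d-1} + \sum_{\substack{i < d-1 \\ c_i \notin S}} g_i.
\]
Evaluating $h$ at $c_j$ for $j < d-1$ and using $g_j(c_i)=\delta_{ij}$, one sees that $h(c_j) = 0$ precisely when $c_j \in S$, so $h$ vanishes on $S$ and is nonzero on $Y \setminus S$. Since each $g_k$ lies in $\Span\{f_0,\dots,f_{d-1}\}$, we can write $h = \oa \cdot \of$ for a unique $\oa \in F^d$; and the linear independence of $\{f_0,\dots,f_{d-1}\}$ (equivalently, of $\{g_0,\dots,g_{d-1}\}$, since the matrix $[g_j(c_i)]$ is the identity) forces $\oa \ne \ozero$ as long as at least one coefficient in $h$ is nonzero. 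Thus $\ZO_{\of,\oa} \cap Y = S$.

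The one subtle point is the case $S = Y$, where the sum over $\{i < d-1 : c_i \notin S\}$ is empty; this is exactly why I include the extra summand $g_{d-1}$, whose coefficient is always $1$, so $h$ is never the zero combination. Everything else reduces to the Kronecker-delta bookkeeping provided by Lemma~\ref{Lemma_VCDim}, which is the real engine of the argument.
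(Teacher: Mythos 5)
Your proof is correct and follows essentially the same route as the paper: both invoke Lemma~\ref{Lemma_VCDim} to obtain the Kronecker-delta family $\og$, both form a linear combination of the $g_i$'s designed to vanish exactly on the desired subset of $\{c_0,\dots,c_{d-2}\}$, and both force the combination to be non-trivial by always including the index $d-1$ (the paper works with non-empty $S \subseteq [d]$ and $g_S = \sum_{i\in S}g_i$, which amounts to the same bookkeeping as your $h$). The only cosmetic difference is that you parametrize by the target subset of $Y$ directly while the paper parametrizes by the complementary index set $S$.
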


\begin{proof}
    By Lemma \ref{Lemma_VCDim}, there exists $\oc \in X^d$ and $\og : X \rightarrow F^d$ such that $g_k \in \Span \{ f_0, \dots, f_{d-1} \}$ for all $k < d$ and, for all $i, j < d$,
    \[
        g_j(c_i) = \begin{cases} 1 & \text{ if } j = i \\ 0 & \text{ otherwise } \end{cases}.
    \]
    For any non-empty $S \subseteq [d]$, let
    \[
        g_S(x) = \sum_{i \in S} g_i(x).
    \]
    Then, for any non-empty $S \subseteq [d]$ and any $i < d$, note that
    \[
        g_S(c_i) = 0 \Longleftrightarrow i \notin S.
    \]
    On the other hand, since $g_k \in \Span \{ f_0, \dots, f_{d-1} \}$ and $g_k$ is not identically zero, for each non-empty $S \subseteq [d]$, there exists $\oa_S \in F^d \setminus \{ 0 \}$ such that $g_S = \oa_S \cdot \of$.  Therefore, for any non-empty $S \subseteq [d]$ and any $i < d$, 
    \[
        c_i \in \ZO_{f,a_s} \Longleftrightarrow i \notin S.
    \]
    Thus, $\CC_{\of}$ shatters $\{ c_0, \dots, c_{d-2} \}$.  So the VC-dimension of $\CC_{\of}$ is at least $d-1$.
\end{proof}

\begin{remark}
    We see from the above proof that, if $\of : X \rightarrow F^d$ is linearly independent, then $\pi_{\CC_{\of}}(d) \ge 2^d - 1$.  Using the next theorem together with the Sauer--Shelah Lemma, we conclude that $\pi_{\CC_{\of}}(d) = 2^d - 1$.
\end{remark}

\begin{theorem}\label{Theorem_VCLDim}
    Let $X$ be a set, $F$ a field, $d \in \ZZp$, and $f : X \rightarrow F^d$ linearly independent.  Then,
    \[
        \vcdim(\CC_{\of}) = \ldim(\CC_{\of}) = d-1.
    \]
\end{theorem}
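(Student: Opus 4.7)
The theorem follows essentially immediately by chaining together the three results already established earlier in the section together with the general inequality from Remark \ref{Remark_VCDimLDimBound}, so the plan is just to assemble these pieces carefully.

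First, I would invoke Proposition \ref{Proposition_LittlestoneDim} to obtain the upper bound $\ldim(\CC_{\of}) < d$, i.e.\ $\ldim(\CC_{\of}) \le d-1$. Note that this bound did not require the linear independence hypothesis, but we will need that hypothesis for the matching lower bound. Next, since we are assuming $\of$ is linearly independent, Proposition \ref{Proposition_VCDim} gives $\vcdim(\CC_{\of}) \ge d-1$.

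Then I would combine these two estimates using Remark \ref{Remark_VCDimLDimBound}(1), which asserts $\vcdim(\CC_{\of}) \le \ldim(\CC_{\of})$ for any set system. Chaining the inequalities yields
\[
    d-1 \le \vcdim(\CC_{\of}) \le \ldim(\CC_{\of}) \le d-1,
\]
so all quantities must equal $d-1$, which is precisely the statement of the theorem.

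There is no real obstacle here because all the work has been done in the preceding results; the main point to verify is simply that the hypotheses of Proposition \ref{Proposition_VCDim} (linear independence of $\of$) and Proposition \ref{Proposition_LittlestoneDim} (no hypothesis beyond $\of : X \rightarrow F^d$) are indeed available, and that the $\vcdim \le \ldim$ inequality applies in complete generality, as recorded in the remark.
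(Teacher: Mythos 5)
Your proposal is correct and follows exactly the paper's own argument: chaining Proposition \ref{Proposition_VCDim}, Remark \ref{Remark_VCDimLDimBound}, and Proposition \ref{Proposition_LittlestoneDim} to get $d-1 \le \vcdim(\CC_{\of}) \le \ldim(\CC_{\of}) < d$. Nothing further is needed.
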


\begin{proof}
    By Proposition \ref{Proposition_LittlestoneDim}, Remark \ref{Remark_VCDimLDimBound}, and Proposition \ref{Proposition_VCDim}, we obtain
    \[
        d-1 \le \vcdim(\CC_{\of}) \le \ldim(\CC_{\of}) < d.
    \]
    The conclusion follows.
\end{proof}

\section{VC-density and Littlestone density of the system of zero sets.}

Now that we have established the VC-dimension and Littlestone dimension of $\CC_{\of}$, we turn our attention to analyzing the VC-density and Littlestone density of $\CC_{\of}$.

\begin{proposition}\label{Proposition_DensityZero}
    Let $X$ be a set, $F$ a field, $d \in \ZZp$, and $\of : X \rightarrow F^d$.  Suppose that there exist $1$-dimensional subspaces $V_i \subseteq F^d$ for $i < k$ and suppose that $\of(X) \subseteq \bigcup_{i < k} V_i$.  Then, $|\CC_{\of}| \le 2^k$.  In particular, $\CC_{\of}$ has VC-density $0$ and Littlestone density $0$.
\end{proposition}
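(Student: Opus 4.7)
The plan is to partition $X$ according to which of the $V_i$ contains $\of(c)$ and observe that every element of $\CC_{\of}$ is a union of cells of this partition. For each $i < k$ I fix a non-zero vector $v_i$ spanning $V_i$, and I set $X_0 = \of^{-1}(\{\ozero\})$ together with $Y_i = \of^{-1}(V_i \setminus \{\ozero\})$ for each $i < k$. By the hypothesis $\of(X) \subseteq \bigcup_{i < k} V_i$, these sets cover $X$.

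Given any $\oa \in F^d \setminus \{\ozero\}$, I claim that $\ZO_{\of,\oa}$ is completely determined by the subset $S(\oa) = \{i < k : \oa \cdot v_i = 0\} \subseteq [k]$. Indeed, $X_0$ lies in every zero set, and for $c \in Y_i$ one can write $\of(c) = \lambda v_i$ with $\lambda \neq 0$, so $\oa \cdot \of(c) = \lambda (\oa \cdot v_i)$ vanishes if and only if $\oa \cdot v_i = 0$; thus $Y_i$ is either contained in $\ZO_{\of, \oa}$ or disjoint from it, according as $i \in S(\oa)$ or not. Since $[k]$ has $2^k$ subsets, this yields $|\CC_{\of}| \le 2^k$.

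For the density statement, note that for every finite $Y \subseteq X$ one trivially has $\bigl| \CC_{\of}|_Y \bigr| \le |\CC_{\of}| \le 2^k$, so $\pi_{\CC_{\of}}$ is bounded by the constant $2^k$. The analogous bound $\rho_{\CC_{\of}}(n) \le |\CC_{\of}|$ follows from the observation that distinct well-labeled leaves of the same tree must carry distinct labels: if $\tau_1 \neq \tau_2$ are leaves and $j$ is the smallest index at which they disagree, then well-labeling forces the common node value $T(\tau_1|_{[j]}) = T(\tau_2|_{[j]})$ to be simultaneously inside and outside the putative common label $T(\tau_1) = T(\tau_2)$. Consequently both shatter functions are bounded by a constant, so $\vcden(\CC_{\of}) = \lden(\CC_{\of}) = 0$. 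No step here poses a real obstacle; the subtlest point is this final observation linking $\rho_{\CC_{\of}}$ to $|\CC_{\of}|$.
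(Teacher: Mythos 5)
Your proof is correct and follows essentially the same approach as the paper: partition $X$ according to which $V_i$ contains $\of(c)$ and observe that every zero set is a union of the resulting cells (always including the $\of^{-1}(\{\ozero\})$ piece), giving at most $2^k$ possibilities. You go a bit further than the paper by spelling out why finiteness of $\CC_{\of}$ bounds $\rho_{\CC_{\of}}$ (distinct well-labeled leaves must carry distinct labels), which the paper leaves implicit in ``in particular''; that extra observation is correct and worth having.
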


\begin{proof}
    Without loss of generality, we may assume that $V_i \neq V_j$ for all $i \neq j$.  Therefore, since $V_i$ and $V_j$ are $1$-dimensional subspaces, $V_i \cap V_j = \{ \ozero \}$.  For each $i < k$, let
    \[
        S_i = \of^{-1}(V_i \setminus \{ \ozero \})
    \]
    and let $S_k = \of^{-1}(\{ \ozero \})$.  For each $i < k$, consider $\CC_{\of} |_{S_i}$.  By assumption, for all $c \in S_i$, $\of(c) \in V_i \setminus \{ \ozero \}$.  Thus, for all $\oa \in F^d$ and $c \in S_i$, $\oa \cdot \of(c) = 0$ if and only if $\oa$ is orthogonal to $V_i$.  Since this is independent of the choice of $c \in S_i$, we see that $\ZO_{\of,\oa} \cap S_i = \emptyset$ or $\ZO_{\of,\oa} \cap S_i = S_i$.  Therefore,
    \[
        \CC_{\of} |_{S_i} = \{ \emptyset, S_i \}.
    \]
    Similarly, one can check that $\CC_{\of} |_{S_k} = \{ S_k \}$.  Therefore,
    \[
        \CC_{\of} \subseteq \left\{ \bigcup_{i \in I} S_i \cup S_k : I \subseteq [k] \right\}.
    \]
    Thus, $| \CC_{\of} | \le 2^k$.
\end{proof}

\begin{remark}
    Let $X$ be a set, $F$ a field, $d \in \ZZp$, and $\of : X \rightarrow F^d$.  If $F$ is a finite field, then $\CC_{\of}$ is finite (since $F^d \setminus \{ \ozero \}$ is finite).  Therefore, $\CC_{\of}$ has VC-density $0$ and Littlestone density $0$.
\end{remark}

In Proposition \ref{Proposition_MaxVCDim} below, we establish that, if $\of(X)$ is not contained in a finite union of proper subspaces of $F^d$, then $\CC_{\of}$ is maximal of VC-dimension $d-1$.  This is done by establishing an infinite sequence of elements from $X$ that have $d$-wise linearly independent images under $\of$.  Then, we use linear independence to select every subset of this sequence of size at most $d-1$ with some $Z_{\of,\ob}$, establishing the maximality of the VC-dimension.

\begin{proposition}\label{Proposition_MaxVCDim}
    Let $X$ be a set, $F$ a field, $d \in \NN$ with $d \ge 2$, and $\of : X \rightarrow F^d$.  If $\of(X)$ is not contained in a finite union of proper subspaces of $F^d$, then $\CC_{\of}$ is maximal of VC-dimension $d-1$.  In particular, $\vcden(\CC_{\of}) = d-1$.
\end{proposition}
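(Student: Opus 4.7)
The plan is to construct, using the hypothesis, an infinite sequence $(c_i)_{i\in\NN}$ in $X$ whose $\of$-images are in strong general position: any $d$ of the vectors $\of(c_i)$ are linearly independent in $F^d$.  Once such a sequence is in hand, for each $n\ge d-1$ the initial segment $Y=\{c_0,\ldots,c_{n-1}\}$ will realize every $S\subseteq Y$ with $|S|\le d-1$ as $\ZO_{\of,\ob}\cap Y$ for some nonzero $\ob$.  This gives $\bigl|\CC_{\of}|_Y\bigr|\ge \binom{n}{\le d-1}$; since the hypothesis in particular forbids $\of(X)$ from lying in a single proper subspace, Lemma~\ref{Lemma_LinearlyIndependent} makes $\of$ linearly independent, so Theorem~\ref{Theorem_VCLDim} gives $\vcdim(\CC_{\of})=d-1$ and the Sauer--Shelah Lemma supplies the matching upper bound.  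Hence $\pi_{\CC_{\of}}(n)=\binom{n}{\le d-1}$ for every $n\ge d-1$, which is maximality of VC-dimension $d-1$; the ``in particular'' clause then follows from Remark~\ref{Remark_VCDenVCDim}.

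I would build $(c_i)$ recursively.  Given $c_0,\ldots,c_{n-1}$ already enjoying the property, it suffices to choose $c_n\in X$ with $\of(c_n)$ outside $\Span\{\of(c_i):i\in I\}$ for every $I\subseteq\{0,\ldots,n-1\}$ with $|I|\le d-1$; by the inductive hypothesis each such span has dimension $|I|\le d-1$ and is therefore a proper subspace of $F^d$, and there are only finitely many such $I$.  The hypothesis that $\of(X)$ is not contained in a finite union of proper subspaces of $F^d$ produces such a $c_n$ at once, and the $d$-wise independence is preserved at stage $n+1$.

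Now fix $n\ge d-1$, set $Y=\{c_0,\ldots,c_{n-1}\}$, and let $S\subseteq Y$ with $|S|\le d-1$.  Put $W_S=\{\ob\in F^d:\ob\cdot\of(c)=0\text{ for all }c\in S\}$.  Since $\{\of(c):c\in S\}$ is linearly independent, $\dim W_S=d-|S|\ge 1$.  For each $c\in Y\setminus S$, applying the $d$-wise independence to the size-$(|S|+1)$ set $S\cup\{c\}$ shows $\of(c)\notin\Span\{\of(c'):c'\in S\}$, so $\{\ob\in W_S:\ob\cdot\of(c)=0\}$ is a proper subspace of $W_S$.  The task reduces to finding an element of $W_S$ avoiding $\{\ozero\}$ together with this finite family of proper subspaces; such an $\ob$ automatically satisfies $\ZO_{\of,\ob}\cap Y=S$.

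The main obstacle is dispatching that last avoidance, and the key observation is that the hypothesis forces $F$ to be infinite once $d\ge 2$: if $F$ were finite, then since every $v\in F^d$ lies in the proper subspace $\Span\{v\}$, the finite set $\of(X)$ would be contained in the finite union $\bigcup_{v\in\of(X)}\Span\{v\}$ of proper subspaces, contradicting the hypothesis.  Over an infinite field a nonzero vector space is not a finite union of proper subspaces (a classical fact), so the desired $\ob\in W_S$ exists.  Letting $S$ range over all $\binom{n}{\le d-1}$ subsets of $Y$ of size at most $d-1$ therefore exhibits $\binom{n}{\le d-1}$ distinct elements of $\CC_{\of}|_Y$, completing the proof.
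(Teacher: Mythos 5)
Your proof is correct, and the first half (the recursive construction of the sequence $(c_i)_{i\in\NN}$ with the property that every $d$ of the $\of(c_i)$ are linearly independent) is identical to what the paper does. Where you diverge is in the realization step: the paper constructs $\ob_I$ only for $I$ of size exactly $d-1$, via a rank argument (the $(d-1)\times d$ matrix with rows $\of(c_i)$, $i\in I$, has a one-dimensional null space, and appending any $\of(c_j)$ with $j\notin I$ yields a full-rank $d\times d$ matrix, forcing $\ob_I\cdot\of(c_j)\neq 0$), and then handles subsets of smaller size by padding them with fresh indices $n, n+1,\dots$ beyond $[n]$, which is why the sequence needs to be infinite. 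You instead realize each $S\subseteq Y$ of size $\le d-1$ directly, by observing that $W_S$ is a nonzero subspace, the exceptional sets $\{\ob\in W_S:\ob\cdot\of(c)=0\}$ for $c\in Y\setminus S$ are proper subspaces of $W_S$ (by $d$-wise independence), and over an infinite field a nonzero vector space is not a finite union of proper subspaces. That forces you to first justify that $F$ is infinite, which you do correctly: for $d\ge 2$, any finite $F$ would make $F^d$ (and hence $\of(X)$) a finite union of lines, contradicting the hypothesis. The paper's rank-and-pad route is slightly more self-contained (no appeal to the infinite-field avoidance fact, no side remark about finite fields); your route is a bit more direct (each $S$ is hit without padding) at the cost of one extra auxiliary lemma. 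Both are sound.
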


\begin{proof}
    First, create a sequence $( c_i )_{i \in \NN}$ with $c_i \in X$ for all $i \in \NN$ such that, for all $I \in \binom{\NN}{\le d}$,
    \begin{equation}\label{Equation_LinearlyIndep}
        \left\{ \of(c_i) : i \in I \right\} \text{ is linearly independent.}
    \end{equation}
    We do this recursively as follows:
    
    First, choose $c_0 \in X$ such that $\of(c_0) \neq \ozero$ (which can be done, since $\of(X) \not\subseteq \{ \ozero \}$).  Now, assume that $c_0, \dots, c_{n-1}$ have been constructed so that, for all $I \in \binom{[n]}{\le d}$, \eqref{Equation_LinearlyIndep} holds.  Consider the set
    \[
        U = \bigcup_{I \in \binom{[n]}{< d}} \Span \left\{ \of(c_i) : i \in I \right\}.
    \]
    This is a union of finitely many proper subspaces of $F^d$, so, by the assumption, there exists $c_n \in X$ such that $\of(c_n) \notin U$.  It is easy to check that, for all $I \in \binom{[n+1]}{\le d}$, \eqref{Equation_LinearlyIndep} holds.  This concludes our construction.
    
    For all $I \in \binom{[n]}{d-1}$, consider the homogeneous system
    \[
        \left[
            \begin{array}{c c c}
                f_0(c_{i_0}) & \cdots & f_{d-1}(c_{i_0}) \\
                \vdots & \ddots & \vdots \\
                f_0(c_{i_{d-2}}) & \cdots & f_{d-1}(c_{i_{d-2}})
            \end{array}
        \right]
        \left[
            \begin{array}{c}
                x_0 \\
                \vdots \\
                x_{d-1}
            \end{array}
        \right] = 
        \left[
            \begin{array}{c}
                0 \\
                \vdots \\
                0
            \end{array}
        \right],
    \]
    where $i_0 < \dots < i_{d-2}$ enumerate $I$.  Since the matrix has rank $d-1$, this has a non-trivial solution $\ob_I \in F^d$.  That is, for all $i \in I$, $\ob_I \cdot \of(c_i) = 0$.  On the other hand, for any $j \in \NN \setminus I$, $\{ \of(c_i) : i \in I \cup \{ j \} \}$ is linearly independent, so we have that
    \[
        \left[
            \begin{array}{c c c}
                f_0(c_{i_0}) & \cdots & f_{d-1}(c_{i_0}) \\
                \vdots & \ddots & \vdots \\
                f_0(c_{i_{d-2}}) & \cdots & f_{d-1}(c_{i_{d-2}}) \\
                f_0(c_j) & \cdots & f_{d-1}(c_j)
            \end{array}
        \right]
        \ob_I \neq 
        \left[
            \begin{array}{c}
                0 \\
                \vdots \\
                0 \\
                0
            \end{array}
        \right].
    \]
    (This holds since $\ob_I \neq 0$ and this matrix has rank $d$.)  Therefore, we see that $\ob_I \cdot f(c_j) \neq 0$.  In other words, we have constructed, for each $I \in \binom{\NN}{d-1}$, $\ob_I \in F^d$ such that, for all $i \in \NN$,
    \[
        \ob_I \cdot \of(c_i) = 0 \Longleftrightarrow i \in I.
    \]
    In other words,
    \[
        c_i \in \ZO_{\of,\ob_I} \Longleftrightarrow i \in I.
    \]
    For any $n \ge d$, for any $I \in \binom{[n]}{< d}$, set $I' = I \cup \{ n, n+1, \dots, n+(d-|I|-2) \}$ and we have, for all $i \in [n]$,
    \[
        c_i \in \ZO_{\of,\ob_{I'}} \Longleftrightarrow i \in I.
    \]
    Thus, 
    \[
        \left| \CC_{\of} |_{ \{ c_i : i \in [n] \} } \right| \ge \binom{n}{< d}.
    \]
    Thus, since $\vcdim(\CC_{\of}) = d-1$, $\CC_{\of}$ is maximal of VC-dimension $d-1$.
\end{proof}

Unfortunately, we do not necessarily get an upper bound on the VC-density of $\CC_{\of}$, even if $\of(X)$ is contained in a union of proper subspaces of $F^d$.

\begin{example}\label{Example_HighVCDen}
    Fix $d \in \NN$ with $d \ge 3$ and $F$ an infinite field.  For each $i < d$, let $\oee_i \in F^d$ denote the $i$th standard basis vector.  For each $i < d-1$, let
    \[
        L_i = \Span \left\{ \oee_0, \oee_{i+1} \right\}.
    \]
    Let $X = \bigcup_{i=0}^{d-2} L_i$ and let $\of : X \rightarrow F^d$ be the identity embedding.  Then, $\vcden(\CC_{\of}) = d-1$, even though $\of(X)$ is contained in the union of $2$-dimensional subspaces of $F^d$.
\end{example}

\begin{proof}
    Since $F$ is infinite, there exists $g : \NN \rightarrow F \setminus \{ 0 \}$ an injective function.  For each $i < d-1$ and $j \in \NN$, define
    \[
        \oc_{i,j} = \oee_0 + g(j) \oee_{i+1}.
    \]
    For each $j_0, \dots, j_{d-2} \in \NN$, let
    \[
        \ob_{j_0, \dots, j_{d-2}} = \left( \prod_{k<d-1} g(j_k) \right) \oee_0 + \sum_{\ell<d-1} \left( - \prod_{k < d-1, k \neq \ell} g(j_k) \right) \oee_{\ell+1}.
    \]
    Then, for all $i < d-1$, $j \in \NN$, and $j_0, \dots, j_{d-2} \in \NN$,
    \begin{align*}
        \ob_{j_0, \dots, j_{d-2}} \cdot \of(c_{i,j}) = & \ \ob_{j_0, \dots, j_{d-2}} \cdot \oc_{i,j} = \\ & \ \prod_{k<d-1} g(j_k) - g(j) \prod_{k < d-1, k \neq i} g(j_k) = \\ & \ (g(j_i) - g(j)) \prod_{k < d-1, k \neq i} g(j_k).
    \end{align*}
    Therefore, $\oc_{i,j} \in \ZO_{\of,\ob_{j_0,\dots,j_{d-2}}}$ if and only if $g(j) = g(j_i)$ if and only if $j = j_i$.  Thus, for any $n \in \NN$,
    \[
        \left| \CC_{\of} |_{ \{ \oc_{i,j} : i < d-1, j < n \} } \right| \ge n^{d-1}.
    \]
    Thus, $\vcden(\CC_{\of}) \ge d-1$.  However, by Theorem \ref{Theorem_VCLDim} and Remark \ref{Remark_VCDenVCDim}, we conclude that $\vcden(\CC_{\of}) = d-1$.
\end{proof}

\begin{corollary}\label{Corollary_MaxLDim}
    Let $X$ be a set, $F$ a field, $d \in \NN$ with $d \ge 2$, and $\of : X \rightarrow F^d$.  If $\of(X)$ is not contained in a finite union of proper subspaces of $F^d$, then $\CC_{\of}$ is maximal of Littlestone dimension $d-1$.  In particular, $\lden(\CC_{\of}) = d-1$.
\end{corollary}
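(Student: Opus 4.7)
The plan is to derive this as an immediate packaging of results already established in the paper, using the final observation of Remark \ref{Remark_VCDenVCDim} as the bridge. First, I would note that the hypothesis that $\of(X)$ is not contained in a finite union of proper subspaces of $F^d$ is strictly stronger than the hypothesis of Lemma \ref{Lemma_LinearlyIndependent}(3), since a single proper subspace is in particular a finite union of proper subspaces. Hence $\of$ is linearly independent, which permits the application of Theorem \ref{Theorem_VCLDim} to conclude that $\ldim(\CC_{\of}) = d-1$.

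Next, I would invoke Proposition \ref{Proposition_MaxVCDim} directly under the given hypothesis to conclude that $\CC_{\of}$ is maximal of VC-dimension $d-1$; that is, $\pi_{\CC_{\of}}(n) = \binom{n}{\le d-1}$ for all $n \ge d-1$. We now find ourselves exactly in the situation flagged at the end of Remark \ref{Remark_VCDenVCDim}: we have a set system which is maximal of VC-dimension $d-1$ and whose Littlestone dimension equals $d-1$. Chaining the inequalities from that remark, for every $n \ge d-1$,
\[
    \binom{n}{\le d-1} = \pi_{\CC_{\of}}(n) \le \rho_{\CC_{\of}}(n) \le \binom{n}{\le d-1},
\]
where the middle inequality is Remark \ref{Remark_VCDimLDimBound}(2) and the final inequality is Lemma \ref{Lemma_Bhaskar}. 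Hence $\rho_{\CC_{\of}}(n) = \binom{n}{\le d-1}$, which is precisely maximality of Littlestone dimension $d-1$.

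For the density claim, I would simply cite Remark \ref{Remark_VCDenVCDim} again: maximality of Littlestone dimension $d-1$ forces $\lden(\CC_{\of}) = \ldim(\CC_{\of}) = d-1$. There is no real obstacle in this corollary, as all the substantive work has been done in Proposition \ref{Proposition_MaxVCDim} and Theorem \ref{Theorem_VCLDim}; the only subtlety worth being careful about is verifying that the hypothesis on $\of(X)$ does indeed entail linear independence, so that Theorem \ref{Theorem_VCLDim} is applicable and provides the matching lower bound on the Littlestone dimension required for the squeeze above.
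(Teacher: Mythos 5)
Your proposal is correct and takes essentially the same route as the paper: combine Proposition \ref{Proposition_MaxVCDim}, Theorem \ref{Theorem_VCLDim}, and the squeeze observation in Remark \ref{Remark_VCDenVCDim}. You are in fact slightly more careful than the paper in explicitly observing that the hypothesis implies linear independence of $\of$ (via Lemma \ref{Lemma_LinearlyIndependent}(3)), which is needed before Theorem \ref{Theorem_VCLDim} can be invoked.
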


\begin{proof}
    By Proposition \ref{Proposition_MaxVCDim}, $\CC_{\of}$ is maximal of VC dimension $d-1$.  By Theorem \ref{Theorem_VCLDim}, $\CC_{\of}$ has Littlestone dimension $d-1$.  By Remark \ref{Remark_VCDenVCDim}, $\CC_{\of}$ is maximal of Littlestone dimension $d-1$.
\end{proof}

\section{Characterization of maximality of systems of zero sets.}  

In this section, we prove the converse to Proposition~\ref{Proposition_MaxVCDim}.

\begin{definition}
    Let $F$ be a field, $d \in \ZZp$, $S \subseteq F^d$ finite, and $\CC \subset \Pow(S)$.  We say that $\CC$ is \emph{span injective} if 
    \begin{enumerate}
        \item for all $A \in \CC$, $\Span(A) \neq F^d$, and
        \item for all $A, B \in \CC$, if $\Span(A) = \Span(B)$, then $A = B$.
    \end{enumerate}
    In other words, $\CC$ is span injective if $\Span$ is an injective function from $\CC$ to the set of all proper subsets of $F^d$.  
\end{definition}

\begin{lemma}\label{Lemma_CfSpanInj}
    Let $X$ be a set, $F$ a field, $d \in \ZZp$, and $\of : X \rightarrow F^d$.  Then, the set
    \[
        \left\{ \of(Z_{\of,\oa}) : \oa \in F^d \setminus \{ \ozero \} \right\}
    \]
    is span injective.
\end{lemma}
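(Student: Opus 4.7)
The plan is to verify the two defining properties of span injectivity directly from the definition of the zero set, using nothing more than the elementary fact that the orthogonal complement of a nonzero vector is a proper subspace of $F^d$ (and is, in particular, closed under linear combinations).

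First I would fix, for each nonzero $\oa \in F^d$, the notation $V_\oa = \{ \oee \in F^d : \oa \cdot \oee = 0\}$. Since $\oa \neq \ozero$, the subspace $V_\oa$ is a proper subspace of $F^d$. By the very definition of $\ZO_{\of,\oa}$, every $c \in \ZO_{\of,\oa}$ satisfies $\oa \cdot \of(c) = 0$, so $\of(c) \in V_\oa$. Therefore $\of(\ZO_{\of,\oa}) \subseteq V_\oa$, and since $V_\oa$ is a subspace, this containment upgrades to $\Span(\of(\ZO_{\of,\oa})) \subseteq V_\oa \neq F^d$. This handles condition (1) of the definition.

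For condition (2), I would take $\oa, \ob \in F^d \setminus \{\ozero\}$ such that $\Span(\of(\ZO_{\of,\oa})) = \Span(\of(\ZO_{\of,\ob}))$ and argue $\of(\ZO_{\of,\oa}) = \of(\ZO_{\of,\ob})$ by double inclusion. Given any $v = \of(c) \in \of(\ZO_{\of,\oa})$, the hypothesis together with the previous paragraph (applied to $\ob$) gives $v \in \Span(\of(\ZO_{\of,\ob})) \subseteq V_\ob$, so $\ob \cdot \of(c) = 0$ and hence $c \in \ZO_{\of,\ob}$, whence $v = \of(c) \in \of(\ZO_{\of,\ob})$. By symmetry the reverse inclusion holds, so the two sets are equal and $\Span$ is injective on the collection.

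I do not anticipate a real obstacle: the argument is a short unpacking of definitions. The only point requiring attention is remembering that span is the linear hull of a set and therefore remains inside any subspace containing the set, which is what transfers the elementwise orthogonality relation $\oa \cdot \of(c) = 0$ to membership in $V_\oa$ on the level of spans.
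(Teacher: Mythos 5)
Your argument is correct and follows essentially the same route as the paper's: verify both conditions of span injectivity directly from the definitions, using the observation that $\of(\ZO_{\of,\oa})$ lies in the kernel of the linear functional $\oa \cdot (-)$ (your $V_\oa$), a proper subspace, and that taking spans preserves this containment. The only cosmetic difference is that the paper concludes the stronger statement $\ZO_{\of,\oa} = \ZO_{\of,\ob}$ and lets that imply equality of images, whereas you work directly at the level of the images $\of(\ZO_{\of,\oa})$; both are fine.
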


\begin{proof}
    For all $\oa \in F^d \setminus \{ \ozero \}$, for all $c \in Z_{\of,\oa}$, $\oa \cdot \of(c) = 0$, so every element of $\of(Z_{\of,\oa})$ is orthogonal to $\oa$.  Hence, $\Span( \of(Z_{\of,\oa}) ) \neq F^d$.  Suppose that $\oa, \ob \in F^d \setminus \{ \ozero \}$ and $\Span(\of(Z_{\of,\oa})) = \Span(\of(Z_{\of,\ob}))$.  Then, as $\oa$ is orthogonal to $\of(Z_{\of,\oa})$, it is orthogonal to $\Span(\of(Z_{\of,\oa}))$, hence it is orthogonal to $\Span(\of(Z_{\of,\ob}))$, so $\oa$ is orthogonal to $\of(Z_{\of,\ob})$.  Thus, for any $c \in Z_{\of,\ob}$, $\oa \cdot \of(c) = 0$, so $c \in Z_{\of,\oa}$.  By symmetry, we obtain that $Z_{\of,\oa} = Z_{\of,\ob}$.
\end{proof}

\begin{lemma}\label{Lemma_SpanInjSmall}
    Let $F$ be a field, $d \in \ZZp$, $S \subseteq F^d$ finite, and $\CC \subseteq \Pow(S)$.  If $\CC$ is span injective, then there exists $\CC' \subseteq \Pow(S)$ such that
    \begin{enumerate}
        \item $|\CC'| = |\CC|$,
        \item $\CC'$ is span injective,
        \item the elements of $\CC'$ are linearly independent, and
        \item $\CC' \subseteq \binom{S}{< d}$.
    \end{enumerate}
\end{lemma}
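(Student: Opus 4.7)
The plan is to take each $A \in \CC$ and thin it down to a maximal linearly independent subset $A' \subseteq A$. Since $A'$ is a maximal linearly independent subset of $A$, we have $\Span(A') = \Span(A)$ (every element of $A$ lies in $\Span(A')$, else we could enlarge $A'$). Set $\CC' = \{ A' : A \in \CC \}$, making an arbitrary choice of $A'$ for each $A$. I will then verify conditions (1)--(4) in order.

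For condition (3), linear independence of the elements of $\CC'$ holds by construction. For condition (4), any $A' \in \CC'$ is a linearly independent subset of $F^d$, so $|A'| \le d$; moreover $\Span(A') = \Span(A) \neq F^d$ by span injectivity of $\CC$, so $A'$ cannot have $d$ elements, giving $|A'| < d$ and hence $A' \in \binom{S}{<d}$. For condition (2), if $A_1, A_2 \in \CC$ satisfy $\Span(A_1') = \Span(A_2')$, then $\Span(A_1) = \Span(A_1') = \Span(A_2') = \Span(A_2)$, so by span injectivity of $\CC$ we get $A_1 = A_2$, and in particular $A_1' = A_2'$; also $\Span(A') = \Span(A) \neq F^d$. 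Condition (1) follows from the same computation: the assignment $A \mapsto A'$ is injective, since $A_1' = A_2'$ forces $\Span(A_1) = \Span(A_2)$ and thus $A_1 = A_2$, so $|\CC'| = |\CC|$.

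There is no real obstacle here; the only thing to be careful about is the equality $\Span(A') = \Span(A)$ for a maximal linearly independent subset $A' \subseteq A$, which is standard linear algebra, and the observation that span injectivity of $\CC$ transfers cleanly to $\CC'$ precisely because passing to a maximal linearly independent subset preserves spans.
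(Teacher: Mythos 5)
Your proof is correct and follows essentially the same route as the paper: the paper picks a minimal $I_A \subseteq A$ with $\Span(I_A)=\Span(A)$, while you pick a maximal linearly independent $A' \subseteq A$, but these are the same construction described dually, and the verifications of the four conditions proceed identically.
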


\begin{proof}
    For each $A \in \CC$, let $I_A \subseteq A$ be minimal such that $\Span(I_A) = \Span(A)$ and let
    \[
        \CC' = \{ I_A : A \in \CC \}.
    \]
    Clearly $\CC'$ is span injective.  Fix $A \in \CC$.  Since $I_A$ is chosen to be minimal, $I_A$ is linearly independent.  Since $\CC$ is span injective, $\Span(I_A) = \Span(A) \neq F^d$, so $|I_A| < d$.  Thus,
    \[
        \CC' \subseteq \binom{S}{< d}.
    \]
    Fix $A, B \in \CC$ and suppose that $I_A = I_B$.  Then,
    \[
        \Span(A) = \Span(I_A) = \Span(I_B) = \Span(B).
    \]
    Since $\CC$ is span injective, $A = B$.  Therefore, $A \mapsto I_A$ is a bijection from $\CC$ to $\CC'$, so $|\CC| = |\CC'|$.
\end{proof}

\begin{lemma}\label{Lemma_NotMaximal}
    Let $F$ be a field, $d \in \ZZp$, $S \subseteq F^d$ finite, and $\CC \in \Pow(S)$.  If $|S| > k(d-1)$, $S$ is contained in the union of $k$ proper subsets of $F^d$, and $\CC$ is span injective, then
    \[
        |\CC| < \binom{|S|}{< d}.
    \]
\end{lemma}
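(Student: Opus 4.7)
The plan is to first invoke Lemma~\ref{Lemma_SpanInjSmall} to replace $\CC$ by a span injective family $\CC' \subseteq \binom{S}{<d}$ of the same cardinality whose elements are all linearly independent subsets of $F^d$. After this reduction it suffices to exhibit at least one member of $\binom{S}{<d}$ that is absent from $\CC'$, since this gives $|\CC| = |\CC'| < \binom{|S|}{<d}$.

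To locate such a missing element, I would apply pigeonhole to the hypothesis $|S| > k(d-1)$. Writing $S \subseteq V_1 \cup \cdots \cup V_k$ with each $V_i$ a proper subspace of $F^d$, if every intersection $S \cap V_i$ had size at most $d-1$, then $|S| \le \sum_{i=1}^k |S \cap V_i| \le k(d-1)$, a contradiction. So fix $j$ with $|S \cap V_j| \ge d$, and set $T = S \cap V_j$ and $m = |T| \ge d$.

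The key observation is that $\Span(T) \subseteq V_j$ is a proper subspace of $F^d$, hence $\dim \Span(T) \le d-1$. Now examine the collection $\binom{T}{d-1}$. Any $A \in \binom{T}{d-1}$ that is linearly dependent lies outside $\CC'$, since every member of $\CC'$ is linearly independent. On the other hand, any linearly independent $A \in \binom{T}{d-1}$ satisfies $\Span(A) = \Span(T)$: indeed $A \subseteq \Span(T)$, $\dim \Span(A) = d-1$, and $\dim \Span(T) \le d-1$, which forces equality of the two spans (and forces $\dim \Span(T) = d-1$). All linearly independent $(d-1)$-subsets of $T$ therefore share the single span $\Span(T)$, so span injectivity of $\CC'$ permits at most one of them to appear in $\CC'$. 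Consequently
\[
    \left| \CC' \cap \binom{T}{d-1} \right| \le 1 < \binom{m}{d-1},
\]
where the strict inequality uses $\binom{m}{d-1} \ge \binom{d}{d-1} = d \ge 2$. Hence at least one member of $\binom{T}{d-1} \subseteq \binom{S}{<d}$ is missing from $\CC'$, yielding $|\CC| = |\CC'| < \binom{|S|}{<d}$.

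The only genuinely delicate step is the pigeonhole bound $|S \cap V_j| \ge d$, which is precisely what the hypothesis $|S| > k(d-1)$ is calibrated to yield; everything else is a direct combination of the linear independence supplied by Lemma~\ref{Lemma_SpanInjSmall} with the span injectivity hypothesis. I am implicitly working in the regime $d \ge 2$ (matching Proposition~\ref{Proposition_MaxVCDim}), since for $d = 1$ the span injective family $\{\emptyset\}$ already has size $\binom{|S|}{<1} = 1$.
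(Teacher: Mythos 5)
Your proof is correct and follows essentially the same route as the paper's: reduce to $\CC'$ via Lemma~\ref{Lemma_SpanInjSmall}, use pigeonhole from $|S|>k(d-1)$ to find a proper subspace $L$ with $|S\cap L|\ge d$, and observe that span injectivity forces at most one $(d-1)$-subset of $S\cap L$ to lie in $\CC'$. You are in fact a bit more careful than the paper in two places: you explicitly dispose of linearly dependent $(d-1)$-subsets, and you flag that the argument and indeed the statement require $d\ge 2$ (for $d=1$ the family $\{\emptyset\}$ is span injective and attains $\binom{|S|}{<1}=1$), which matches how the lemma is actually invoked in Proposition~\ref{Proposition_NotMaxVCDim}.
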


\begin{proof}
    Let $\CC'$ be given as in Lemma \ref{Lemma_SpanInjSmall}.  Since $|\CC| = |\CC'|$ and $\CC' \subseteq \binom{S}{< d}$, it suffices to show that there exists a $(d-1)$-element subset of $S$ not in $\CC'$.
    
    Since $S$ is contained in the union of $k$ proper subsets of $F^d$ and $|S| > k(d-1)$, by the pigeonhole principle, there exists a proper subspace $L$ of $F^d$ that contains at least $d$ elements of $S$.  Suppose that there exists $A, B \in \binom{S \cap L}{d-1}$ such that $A, B \in \CC'$.  Since $A$ and $B$ are linearly independent of cardinality $d-1$ and $L$ has dimension less than $d$, $\Span(A) = \Span(B) = L$.  Since $\CC'$ is span injective, $A = B$.  That is, only one element of $\binom{S \cap L}{d-1}$ belongs to $\CC'$.
    
    Therefore,
    \[
        |\CC| = |\CC'| < \binom{|S|}{<d}.
    \]
\end{proof}

We are ready to establish the converse to Proposition~\ref{Proposition_MaxVCDim}.

\begin{proposition}\label{Proposition_NotMaxVCDim}
    Let $X$ be a set, $F$ a field, $d \in \NN$ with $d \ge 2$, and $\of : X \rightarrow F^d$.  If $\of(X)$ is contained in a finite union of proper subspaces of $F^d$, then $\CC_{\of}$ is not maximal of VC-dimension $d-1$.
\end{proposition}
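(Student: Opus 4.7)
The plan is to show that for all sufficiently large $n$, $\pi_{\CC_{\of}}(n) < \binom{n}{\le d-1}$, which directly refutes maximality of VC-dimension $d-1$. Fix proper subspaces $V_0, \dots, V_{k-1}$ of $F^d$ with $\of(X) \subseteq \bigcup_{i<k} V_i$, and let $Y \in \binom{X}{n}$ be arbitrary. The key reduction is to transfer the counting problem from $\CC_{\of}|_Y$ to the image set $S := \of(Y)$, which is automatically contained in the same union of $k$ proper subspaces, and where the span-injective combinatorics of Section~5 become available.

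The first step is to establish
\[
    \bigl| \CC_{\of}|_Y \bigr| = \bigl| \bigl\{ \of(\ZO_{\of,\oa} \cap Y) : \oa \in F^d \setminus \{\ozero\} \bigr\} \bigr|.
\]
Since $\ZO_{\of,\oa} \cap Y$ is the zero set of the restriction $\of|_Y$ against $\oa$, Lemma~\ref{Lemma_CfSpanInj} applied to $\of|_Y$ gives that the image family on $S$ is span injective. The same orthogonality reasoning shows that the assignment $\ZO_{\of,\oa} \cap Y \mapsto \of(\ZO_{\of,\oa} \cap Y)$ is injective: if two such images coincide, then $\oa$, being orthogonal to its own image, is also orthogonal to the other image, forcing the two zero sets to coincide. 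Consequently, $\bigl|\CC_{\of}|_Y\bigr|$ equals the cardinality of a span-injective family on $S$.

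Next, split on the size of $S$. If $|S| > k(d-1)$, Lemma~\ref{Lemma_NotMaximal} applies directly and yields $\bigl|\CC_{\of}|_Y\bigr| < \binom{|S|}{<d} \le \binom{n}{<d} = \binom{n}{\le d-1}$ by monotonicity of $\binom{m}{<d}$ in $m$. If instead $|S| \le k(d-1)$, then every member of the span-injective family is a subset of $S$, so $\bigl|\CC_{\of}|_Y\bigr| \le 2^{|S|} \le 2^{k(d-1)}$, a constant independent of $Y$; since $d \ge 2$ gives $\binom{n}{\le d-1} \ge n$, any $n > 2^{k(d-1)}$ forces this constant to be strictly below $\binom{n}{\le d-1}$. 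Taking $n$ large enough to handle both cases simultaneously yields $\pi_{\CC_{\of}}(n) < \binom{n}{\le d-1}$, so $\CC_{\of}$ is not maximal of VC-dimension $d-1$. I expect the main obstacle to be verifying the identity in the first step: one must carefully check that passing from $\CC_{\of}|_Y$ to the image family on $\of(Y)$ preserves cardinality, as this is precisely what allows the combinatorial bound of Lemma~\ref{Lemma_NotMaximal} to actually control $\pi_{\CC_{\of}}(n)$.
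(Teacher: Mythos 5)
Your proof is correct and follows essentially the same route as the paper: pass from $\CC_{\of}|_Y$ to the image family $\{\of(\ZO_{\of,\oa}\cap Y)\}$ on $S=\of(Y)$, apply Lemma~\ref{Lemma_CfSpanInj} to $\of|_Y$ to get span-injectivity, and then invoke Lemma~\ref{Lemma_NotMaximal}. The one genuine difference is that your write-up is more careful about two steps that the paper's proof treats only implicitly. First, you spell out why $\ZO_{\of,\oa}\cap Y \mapsto \of(\ZO_{\of,\oa}\cap Y)$ is injective (if $\oa$ is orthogonal to the common image, then $Z_{\of,\ob}\cap Y\subseteq Z_{\of,\oa}\cap Y$ and vice versa), which is what justifies $\bigl|\CC_{\of}|_Y\bigr|=|\CC|$ rather than the backwards inequality one gets for free from surjectivity; the paper writes $|\CC_{\of|_{X_0}}|\le|\CC|$ without comment. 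Second, you handle the possibility that $|\of(Y)|<|Y|$, in which case $|S|$ may fail the hypothesis $|S|>k(d-1)$ of Lemma~\ref{Lemma_NotMaximal}; the paper fixes $n=k(d-1)+1$ and applies the lemma as if $|S|=n$. Your fallback bound $|\CC_{\of}|_Y|\le 2^{k(d-1)}$ (one could equally well cite Lemma~\ref{Lemma_SpanInjSmall} to get $\le\binom{|S|}{<d}<\binom{n}{<d}$ directly, avoiding the need to enlarge $n$ beyond $k(d-1)+1$) closes this case cleanly, at the minor cost of taking $n$ large rather than a single explicit $n$. In short: same key lemmas and same transfer-to-the-image idea, but your version makes the cardinality identification and the non-injective case explicit.
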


\begin{proof}
    Let $n = k(d-1)+1$ and fix some $X_0 \in \binom{X}{n}$.  Consider $\of |_{X_0} : X_0 \rightarrow F^d$ and, for each $\oa \in F^d \setminus \{ \ozero \}$, the corresponding $Z_{\of |_{X_0}, \oa} \in \Pow(X_0)$.  By Lemma \ref{Lemma_CfSpanInj}, the set
    \[
        \CC = \{ \of |_{X_0} (Z_{\of |_{X_0},\oa}) : \oa \in F^d \setminus \{ \ozero \} \}
    \]
    is span injective.  By Lemma \ref{Lemma_NotMaximal},
    \[
        | \CC | < \binom{n}{<d}.
    \]
    It is clear that, for all $\oa \in F^d \setminus \{ \ozero \}$,
    \[
        Z_{\of,\oa} \cap X_0 = Z_{\of |_{X_0},\oa}.
    \]
    Thus, $\CC_{\of} |_{X_0} = \CC_{\of |_{X_0}}$.  Therefore,
    \[
        | \CC_{\of} |_{X_0} | = | \CC_{\of |_{X_0}} | \le | \CC | < \binom{n}{<d}.
    \]
    Thus, $\CC_{\of}$ is not maximal of VC-dimension $d-1$.
\end{proof}

We thus obtain the following result.

\begin{theorem}\label{Theorem_MaxVCDim}
    Let $X$ be a set, $F$ a field, $d \in \NN$ with $d \ge 2$, and $\of : X \rightarrow F^d$.  Then, the following are equivalent:
    \begin{enumerate}
        \item $\CC_{\of}$ is maximal of VC-dimension $d-1$;
        \item $\of(X)$ is not contained in a finite union of proper subspaces of $F^d$.
    \end{enumerate}
\end{theorem}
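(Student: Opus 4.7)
The theorem is a clean equivalence, and the heavy lifting has already been done in Propositions \ref{Proposition_MaxVCDim} and \ref{Proposition_NotMaxVCDim}. My plan is therefore just to package these two results together, since one gives the implication (2) $\Rightarrow$ (1) directly and the other gives the contrapositive of (1) $\Rightarrow$ (2).

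For the direction (2) $\Rightarrow$ (1), I would simply invoke Proposition \ref{Proposition_MaxVCDim}: if $\of(X)$ is not contained in a finite union of proper subspaces of $F^d$, then $\CC_{\of}$ is maximal of VC-dimension $d-1$. This is literally the statement of that proposition, so nothing further is needed.

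For the direction (1) $\Rightarrow$ (2), I would argue by contrapositive. Suppose $\of(X)$ \emph{is} contained in a finite union of proper subspaces of $F^d$. Then Proposition \ref{Proposition_NotMaxVCDim} applies and tells us that $\CC_{\of}$ is not maximal of VC-dimension $d-1$. Contrapositively, if $\CC_{\of}$ is maximal of VC-dimension $d-1$, then $\of(X)$ is not contained in a finite union of proper subspaces of $F^d$.

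There is no real obstacle here: the theorem is essentially a bookkeeping statement that records the equivalence already established by the two preceding propositions. The only thing to be careful about is ensuring the conventions match (in particular, that $d \ge 2$ is the hypothesis used in both Proposition \ref{Proposition_MaxVCDim} and Proposition \ref{Proposition_NotMaxVCDim}), which they do. Accordingly, the proof should be a short two-sentence citation, with no further computation required.
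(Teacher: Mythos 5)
Your proposal is correct and matches the paper's proof exactly: the paper likewise deduces (2) $\Rightarrow$ (1) from Proposition \ref{Proposition_MaxVCDim} and (1) $\Rightarrow$ (2) from the contrapositive of Proposition \ref{Proposition_NotMaxVCDim}. Nothing further is needed.
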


\begin{proof}
    (1) $\Rightarrow$ (2) follows from the contrapositive of Proposition \ref{Proposition_NotMaxVCDim} and (2) $\Rightarrow$ (1) follows from Proposition \ref{Proposition_MaxVCDim}.
\end{proof}

It turns out that the conditions in Theorem \ref{Theorem_MaxVCDim} are not equivalent to $\CC_{\of}$ being maximal \emph{Littlestone} dimension $d-1$.

\begin{example}
    Consider the function $\of : X \rightarrow F^d$ in Example \ref{Example_HighVCDen}.  That is, $F$ is an infinite field, $d \ge 3$,
    \[
        X = \bigcup_{i=0}^{d-2} \Span \{ \oee_0, \oee_{i+1} \},
    \]
    and $\of: X \rightarrow F^d$ is the identity embedding.  Then, $\CC_{\of}$ is maximal of Littlestone dimension $d-1$, even though $\of(X)$ is containd in a union of $2$-dimensional subspaces of $F^d$.
\end{example}

\begin{proof}
    Define $\oc_{i,j}$ for $i \in [d-1]$ and $j \in \NN$ and define $\ob_{j_0, \dots, j_{d-2}}$ for $j_0, \dots, j_{d-2} \in \NN$ as in Example \ref{Example_HighVCDen}.  In particular, $\oc_{i,j} \in X$ for all $i \in [d-1]$ and $j \in \NN$ and
    \[
        \oc_{i,j} \in Z_{\of, \ob_{j_0,\dots,j_{d-2}}} \Longleftrightarrow j = j_i
    \]
    for all $i \in [d-1]$, $j, j_0, \dots, j_{d-2} \in \NN$.  Fix $n \in \NN$ and we define a $(X, \CC_{\of})$-labeled tree $T$ of depth $n$ with $\binom{n}{<d}$ well-labeled leaves, showing that $\CC_{\of}$ is maximal of Littlestone dimension $d-1$.
    
    For each $k \in [n]$ and $\sigma \in {}^{[k]} [2]$, let $i = | \supp(\sigma) |$ and let
    \[
        T(\sigma) = \begin{cases} \oc_{i,k} & \text{ if } i < d-1 \\ \oc_{0,n} & \text{ otherwise} \end{cases}
    \]
    For $\tau \in {}^{[n]} [2]$, let $j_0 < j_1 < \dots < j_{\ell-1}$ enumerate the set $\supp(\sigma)$.  If $\ell < d-1$, let $j_\ell = j_{\ell+1} = \dots = j_{d-2} = n$.  Then, let
    \[
        T(\tau) = Z_{\of, \ob_{j_0,\dots,j_{d-2}}}.
    \]
    We claim that, for each $\tau \in {}^{[n]}[2]$, if $|\supp(\tau)| < d$, then $(\tau, T(\tau))$ is well-labeled.  This exhibits $\binom{n}{<d}$ well-labeled leaves of $T$.
    
    \begin{center}
        \begin{tikzpicture}
            \draw (0,3) node {$\oc_{0,0}$};
            \draw (-0.3,2.85) -- (-0.7,2.65);
            \draw (-1,2.5) node {$\oc_{0,1}$};
            \draw (-1.3,2.35) -- (-1.7,2.15);
            \draw (-2,2) node {$\oc_{0,2}$};
            \draw (-1.7,1.85) -- (-1.3,1.65);
            \draw (-1,1.5) node {$\oc_{1,3}$};
            \draw (-1.3,1.35) -- (-1.7,1.15);
            \draw (-2,1) node {$\oc_{1,4}$};
            \draw (-2.3,0.85) -- (-2.7,0.65);
            \draw (-3,0.5) node {$\oc_{1,5}$};
            \draw (-2.7,0.35) -- (-2.3,0.15);
            \draw (-1.75,0) node {$Z_{\of, \ob_{2,5}}$};
        \end{tikzpicture}
    \end{center}
    
    Fix $\tau \in {}^{[n]} [2]$ with $|\supp(\tau)| < d$.  Say $j_0 < \dots < j_{\ell-1}$ enumerates $\supp(\tau)$.  Clearly $\ell \le d-1$.  Set $j_\ell = j_{\ell+1} = \dots = j_{d-2} = n$ and $j_{-1} = -1$.  Fix $k \in [n]$.  If $j_{i-1} < k \le j_i$ for some $i \in [d-1]$, then
    \[
        |\supp(\tau |_{[k]})| = | \{ j_0, ..., j_{i-1} \} | = i.
    \]
    Therefore, $T(\tau |_{[k]}) = \oc_{i,k}$.  Moreover,
    \[
        T(\tau) = Z_{\of, \ob_{j_0,\dots,j_{d-2}}}.
    \]
    By construction, $T(\tau |_{[k]}) \in T(\tau)$ if and only if $k = j_i$ if and only if $\tau(k) = 1$.  Finally, if $j_{d-2} < k < n$, then $| \supp(\tau |_{[k]}) | = d-1$, so $T(\tau |_{[k]}) = \oc_{0,n}$.  Thus, since $j_0 < n$, $T(\tau |_{[k]}) \notin T(\tau)$.  Moreover, $\tau(k) = 0$.
    
    In any case, we see that, for all $k \in [n]$, $T(\tau |_{[k]}) \in T(\tau)$ if and only if $\tau(k) = 1$.  Therefore, we conclude that $(\tau, T(\tau))$ is well-labeled.
\end{proof}

\section{Conclusion}

We have determined the VC-dimension and Littlestone dimension of $\CC_{\of}$, as well as given a characterization for when $\CC_{\of}$ is maximal in VC-dimension.  We can apply these results, for example, to the set of conic sections in $\RR^2$ via Example \ref{Example_Conics}.

\begin{example}
    Let $X = \RR^2$, $F = \RR$, $d = 6$, and
    \[
        \of(x,y) = (x^2,xy,y^2,x,y,1).
    \]
    As noted above, $\CC_{\of}$ is the set of all conic sections in $\RR^2$.  For example, we have in the image of $\of$,
    \begin{align*}
        \of(0,0) = & \ (0, 0, 0, 0, 0, 1), \\
        \of(1,0) = & \ (1, 0, 0, 1, 0, 1), \\
        \of(0,1) = & \ (0, 0, 1, 0, 1, 1), \\
        \of(1,1) = & \ (1, 1, 1, 1, 1, 1), \\
        \of(2,1) = & \ (4, 2, 1, 2, 1, 1), \\
        \of(1,2) = & \ (1, 2, 4, 1, 2, 1).
    \end{align*}
    It is easy to check that these vectors form a basis for $\RR^6$.  By Lemma \ref{Lemma_LinearlyIndependent}, $\of$ is linearly independent.  Thus, by Theorem \ref{Theorem_VCLDim}, $\CC_{\of}$ has VC-dimension and Littlestone dimension $5$.  Moreover, $\of(\RR^2)$ is not contained in a finite union of proper subspaces of $\RR^6$.
    
    Towards a contradiction, suppose that $\of(\RR^2)$ is contained in finitely many proper subspaces of $\RR^6$, say $L_0, \dots, L_{m-1}$.  For each $i < m$, suppose that $\oa_i \in \RR^6$ is orthogonal to $L_i$.  For each $\ob \in \RR^2$, $\of(\ob) \in \of(\RR^2)$, hence there exists $i < m$ such that $\of(\ob) \in L_i$.  That is, $\oa_i \cdot \of(\ob) = \ozero$, so $\ob \in Z_{\of,\oa_i}$.  Thus, 
    \[
        \RR^2 \subseteq \bigcup_{i < m} Z_{\of,\oa_i}.
    \]
    However, each $Z_{\of,\oa_i}$ is a conic section.  This is a contradiction, since we cannot cover $\RR^2$ with finitely many conic sections.
    
    Therefore, by Theorem \ref{Theorem_MaxVCDim}, $\CC_{\of}$ is maximal of VC-dimension $5$.  By Corollary \ref{Corollary_MaxLDim}, $\CC_{\of}$ is maximal of Littlestone dimension $5$.  In particular, the set of conic sections in $\RR^2$ has VC-density $5$ and Littlestone density $5$.
\end{example}

The results of this paper can be applied to more general situations.  For example, consider the set of axes-aligned ellipses.  We can use Theorem \ref{Theorem_VCLDim} to compute the VC-dimension and Littlestone dimension of this class.

\begin{example}
    Let $\CC$ be the set of axes-aligned ellipses in $\RR^2$.  Formally, let
    \[
        E_{a,b,c,d} = \left\{ (x,y) \in \RR^2 : a(x-b)^2 + c(y-d)^2 = 1 \right\}
    \]
    and let $\CC$ be the class of all $E_{a,b,c,d}$ where $a,b,c,d \in \RR$ and $a,c > 0$.  Note that $\CC$ is a subclass of the class $\CC_{\of}$, where $\of : \RR^2 \rightarrow \RR^5$ given by
    \[
        \of(x,y) = (x^2, y^2, x, y, 1).
    \]
    It is easy to show that $\of$ is linearly independent.  By Theorem \ref{Theorem_VCLDim}, $\CC_{\of}$ has VC-dimension and Littlestone dimension $4$.  Moreover, it is not hard to show that $\CC$ has VC-dimension at least $4$ (for example, the set $\{ (1,0), (0,1), (-1,0), (0,-1) \}$ is shattered by $\CC$).  Therefore,
    \[
        4 \le \vcdim(\CC) \le \ldim(\CC) \le \ldim(\CC_{\of}) = 4.
    \]
    Thus, $\CC$ has VC-dimension and Littlestone dimension $4$.  On the other hand, although we can show that $\CC_{\of}$ is maximal of VC-dimension and Littlestone dimension $4$, it is more difficult to apply Theorem \ref{Theorem_MaxVCDim} on the class of axes-aligned ellipses.  
\end{example}

In general, it may be interesting to examine what happens when we restrict the parameters of $\CC_{\of}$.  Under what conditions does the VC-dimension or Littlestone dimension drop below $d-1$?  What can be said about the maximality of VC-dimension or Littlestone dimension in this case?

\begin{bibdiv}
\begin{biblist}

\bib{Bhaskar}{article}{
   author={Bhaskar, S.},
   title={Thicket Density},
   journal={Journal of Symbolic Logic},
   volume={86},
   number={1},
   date={2021},
   pages={110--127}
}

\bib{cf}{article}{
   author = {Chase, Hunter},
   author = {Freitag, James},
   title = {Model Theory and Machine Learning},
   journal = {The Bulletin of Symbolic Logic},
   date = {2019},
   volume = {25},
   number = {3},
   pages = {319--332}
}

\bib{chase2020}{article}{
  title={Model theory and combinatorics of banned sequences},
  author={Chase, Hunter},
  author={Freitag, James},
  journal={The Journal of Symbolic Logic},
  pages={1--19},
  year={2020},
  publisher={Cambridge University Press}
}

\bib{cover1965}{article}{
  title={Geometrical and statistical properties of systems of linear inequalities with applications in pattern recognition},
  author={Cover, Thomas M},
  journal={IEEE transactions on electronic computers},
  number={3},
  pages={326--334},
  year={1965},
  publisher={IEEE}
}

\bib{dudley1979}{article}{
  title={Balls in $\mathbb R^k$ do not cut all subsets of $k+2$ points},
  author={Dudley, Richard M},
  journal={Advances in Mathematics},
  volume={31},
  number={3},
  pages={306--308},
  year={1979},
  publisher={Elsevier}
}

\bib{floyd1989}{inproceedings}{
  title={Space-bounded learning and the Vapnik-Chervonenkis dimension},
  author={Floyd, Sally},
  booktitle={Proceedings of the second annual workshop on Computational learning theory},
  pages={349--364},
  year={1989}
}

\bib{johnson2014}{article}{
  title={Some new maximum VC classes},
  author={Johnson, Hunter R},
  journal={Information Processing Letters},
  volume={114},
  number={6},
  pages={294--298},
  year={2014},
  publisher={Elsevier}
}

\end{biblist}
\end{bibdiv}

\end{document}